\def\tr#1{\left\lfloor#1\right\rfloor}
\def\dd#1{\,\mbox{d}#1}
\def\le{\leqslant}
\def\ge{\geqslant}
\newtheorem{thm}{Theorem}[section]
\newtheorem{cor}[thm]{Corollary}
\newtheorem{lem}[thm]{Lemma}
\theoremstyle{remark}
\newtheorem{rem}[thm]{Remark}
\title{Limit laws of the coefficients of polynomials
with only unit roots}
\author{Hsien-Kuei Hwang\thanks{Part of the work of this author
was done while visiting ISM (Institute of Statistical Mathematics),
Tokyo; he thanks ISM for its hospitality and support.} \\
    Institute of Statistical Science, \\
    Institute of Information Science\\
    Academia Sinica\\
    Taipei 115\\
    Taiwan
\and Vytas Zacharovas\thanks{Part of the work of this author was done during 
his visit at Institute of Statistical Science, Academia Sinica,
Taipei. He gratefully acknowledges the support 
provided by the institute.} \\
    Dept. Mathematics \&\ Informatics\\
    Vilnius University\\
    Naugarduko 24, Vilnius\\
    Lithuania}
\date{\today}
\begin{document}
\maketitle

\begin{abstract}
We consider sequences of random variables whose probability
generating functions are polynomials all of whose roots lie on the 
unit circle. The distribution of such random variables has only 
been sporadically studied in the literature. We show that the
random variables are asymptotically normally distributed if and only
if the fourth normalized (by the standard deviation) central 
moment tends to $3$, in contrast to the common scenario for
polynomials with only real roots for which a central limit theorem
holds if and only if the variance goes unbounded. We also derive a
representation theorem for all possible limit laws and apply our
results to many concrete examples in the literature, ranging from
combinatorial structures to numerical analysis, and from probability
to analysis of algorithms.
\end{abstract}

\section{Introduction}

The close connection between the location of the zeros of a function
(or a polynomial) and the distribution of its coefficients has long
been the subject of extensive study; typical examples include the
order of an entire function and its zeros in Analysis, and the limit
distribution of the coefficients of polynomials when all roots are
real in Combinatorics, Probability and Statistical Physics. We
address in this paper the situation when the roots of the sequence
of probability generating functions all lie on the unit circle.
While one may convert the situation with only unimodular zeros to
that with only real zeros by a suitable change of variables, such
\emph{root-unitary polynomials} turn out to have many fascinating 
properties due mainly to the boundedness of all zeros. In particular, 
we show that the fourth normalized central moments are (asymptotically) 
always bounded between $1$ and $3$, the limit distribution being 
Bernoulli if they tend to $1$ and Gaussian if they tend to $3$.

Although this class of polynomials does not have a standard name, we
will refer to them as, following \cite{kedlaya08a} and for
convention, root-unitary polynomials. Other related terms include
\emph{self-inversive} (zeros symmetric in the unit circle),
\emph{reciprocal} or \emph{self-reciprocal} ($P(z) = z^n
P(z^{-1})$), \emph{uni-modular} (all coefficients of modulus one),
\emph{palindromic} ($a_j=a_{n-j}$), etc., when $P(z) = \sum_{0\le
j\le n}a_j z^j$ is a polynomial of degree $n$.

Unit roots of polynomials play a very special and important role in
many scientific and engineering disciplines, notably in statistics
and signal processing where the unit root test decides if a time
series variable is non-stationary. On the other hand, many
nonparametric statistics are closely connected to partitions of
integers, which lead to generating functions whose roots all lie on
the unit circle. We will discuss many examples in Sections
\ref{sec-app-i} and \ref{sec-app-ii}. Another famous example is the
Lee-Yang partition function for Ising model, which has stimulated a
widespread interest in the statistical-physical literature since the
1950's.


While there is a large literature on polynomials with only real
zeros, the distribution of the coefficients of root-unitary
polynomials has only been sporadically studied; more references will
be given below. It is well known that for polynomials with
nonnegative coefficients whose roots are all real, one can decompose
the polynomials into products of linear factors, implying that the
associated random variables are expressible as sums of independent
Bernoulli random variables. Thus one obtains \emph{a Gaussian limit
law for the coefficients if and only if the variance tends to infinity}; 
see the survey paper \cite{pitman97a} for more information and for finer
estimates. A representative example is the Stirling numbers of the
second kind for which \cite{harper67a} showed that the generating
polynomials have only real roots\footnote{The fact that the Stirling
polynomials (of the second kind) have only real roots had been known
long before Harper; see for example \cite{docagne87a}; in addition,
\cite{bell38a} wrote (without providing reference) that all results
of d'Ocagne's paper were already obtained thirty years before him by
a number of English authors.}; he also established the asymptotic
normality of these numbers by proving that the variance tends to
infinity. For more examples and information on polynomials with only
real roots, see \cite{brenti94a}, \cite{pitman97a} and the
references therein. See also \cite{haigh71a}, \cite{hayman56a},
\cite{renyi72a} for different extensions.

Our first main result states that if we restrict the range where the
roots of the polynomials $P_n(z)$ can occur to the unit circle
$|z|=1$, then the asymptotic normality of $X_n$ defined by the
coefficients is determined by the limiting behavior of its fourth
normalized central moment. \emph{Throughout this paper, write}
$X_n^* := (X_n - \mathbb{E}(X_{n}))/ \sqrt{\mathbb{V}(X_n)}$.

\begin{thm}\label{limit-law}
Let $\{X_{n}\}$ be a sequence of discrete random variables whose
probability generating functions $\mathbb{E}(z^{X_n})$ are polynomials
of degree $n$ with all roots $\rho_j$ lying on the unit circle
$|\rho_j|=1$.
\begin{description}
\item [--] (Bounds for the fourth normalized central moment)
\begin{align} \label{4th-mm-bds}
    1\le \mathbb{E}\left(X_n^*\right)^4 < 3
    \qquad(n\ge1).
\end{align}
\item [--] (Asymptotic normality)
The sequence of random variables $\{X_n^*\}$ converges in
distribution (and with all moments) to the standard normal law
$\mathscr{N}(0,1)$ if and only if
\begin{align} \label{Xn-clt}
    \mathbb{E}\left(X_n^*\right)^4\to 3 \qquad(n\to\infty).
\end{align}

\item [--] (Asymptotic Bernoulli distribution)
The sequence $\{X_n^*\}$ converges to Bernoulli random variable
assuming the two values $-1$ and $1$ with equal probabilities 
if and only if
\begin{align} \label{Xn-bernoulli}
    \mathbb{E}\left(X_n^*\right)^4\to 1\qquad(n\to\infty).
\end{align}
\end{description}
\end{thm}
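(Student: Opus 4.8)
\emph{Strategy.} The plan is to pass through the cumulants of $X_n$, exploiting the factorization of the probability generating function into one linear factor per root.

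First I would record the factorization. Since $\mathbb{E}(z^{X_n})$ is a probability generating function it equals $1$ at $z=1$, so no root $\rho_j$ equals $1$ and $\mathbb{E}(z^{X_n})=\prod_{1\le j\le n}\frac{z-\rho_j}{1-\rho_j}$. Taking the logarithm of the characteristic function near $t=0$ (legitimate, as no $\rho_j=1$) shows the cumulant generating function of $X_n$ is $\sum_j f_{\rho_j}(t)$ with $f_\rho(t)=\log\frac{e^{\mathrm{i}t}-\rho}{1-\rho}$, so the $r$-th cumulant is additive over the roots: $\kappa_r(X_n)=\sum_j\kappa_r^{(j)}$. Writing $\rho_j=e^{\mathrm{i}\theta_j}$ with $|\theta_j|\le\pi$, the key computations I would carry out are $\kappa_2^{(j)}=|1-\rho_j|^{-2}=(4\sin^2(\theta_j/2))^{-1}=:\sigma_j^2$, so in particular $\sigma_j^2\ge\tfrac14$ (this is exactly where root-unitarity, i.e.\ $|\theta_j|\le\pi$, is used), and, differentiating $f_\rho$ twice more, $\kappa_4^{(j)}=\sigma_j^2-6\sigma_j^4$; incidentally every root-unitary probability generating function is palindromic, so $X_n$ is symmetric about $\mathbb{E}X_n$ and its odd cumulants vanish. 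I would also record the crude bound $\kappa_r^{(j)}=Q_r(\rho_j)/(1-\rho_j)^r$ for a fixed polynomial $Q_r$, which gives $|\kappa_r^{(j)}|\le c_r|1-\rho_j|^{-r}=c_r\sigma_j^r$ with $c_r=\lVert Q_r\rVert_1$. Set $\sigma^2:=\mathbb{V}(X_n)=\sum_j\sigma_j^2$, $p_j:=\sigma_j^2/\sigma^2$ (so $\sum_jp_j=1$), and $\tilde\kappa_r:=\kappa_r(X_n)/\sigma^r$; then $\tilde\kappa_2=1$, $\mathbb{E}(X_n^*)^4=3+\tilde\kappa_4$, and $|\tilde\kappa_r|\le c_r\sum_jp_j^{r/2}$ for $r\ge3$. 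For the bounds \eqref{4th-mm-bds}: the lower bound is universal, $\mathbb{E}(X_n^*)^4=\mathbb{E}\bigl((X_n^*)^2\bigr)^2\ge\bigl(\mathbb{E}(X_n^*)^2\bigr)^2=1$ by Cauchy--Schwarz; for the strict upper bound, $\kappa_4^{(j)}=\sigma_j^2(1-6\sigma_j^2)\le\tfrac14(1-\tfrac32)<0$ for every $j$, so $\kappa_4(X_n)<0$ for $n\ge1$ and $\mathbb{E}(X_n^*)^4=3+\kappa_4(X_n)/\sigma^4<3$.

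For sufficiency in the two criteria I would first extract, from $3-\mathbb{E}(X_n^*)^4=-\tilde\kappa_4=6\sum_jp_j^2-\sigma^{-2}$ together with $\sigma^2\ge n/4$ and $\sum_j\sigma_j^4\ge(\sum_j\sigma_j^2)^2/n\ge\sigma^2/4$ (Cauchy--Schwarz), the clean inequality $\sum_jp_j^2\le\tfrac12\bigl(3-\mathbb{E}(X_n^*)^4\bigr)$. Hence $\mathbb{E}(X_n^*)^4\to3$ forces $\max_jp_j\to0$, so $|\tilde\kappa_r|\le c_r(\max_jp_j)^{r/2-1}\to0$ for every $r\ge3$; since each central moment of $X_n^*$ is a fixed universal polynomial in $\tilde\kappa_2,\tilde\kappa_3,\dots$ with no singleton blocks, all moments of $X_n^*$ converge to those of $\mathscr{N}(0,1)$, and moment-determinacy closes the argument (with all moments). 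In the Bernoulli direction, $\mathbb{E}(X_n^*)^4\to1$ says $\mathbb{V}\bigl((X_n^*)^2\bigr)=\mathbb{E}(X_n^*)^4-1\to0$, so $(X_n^*)^2\to1$ in $L^2$ and $|X_n^*|\to1$ in probability; since $\mathbb{E}X_n^*=0$ and $\{X_n^*\}$ is bounded in $L^2$ (hence uniformly integrable), the positive and negative parts of $X_n^*$ each carry asymptotic mass $\tfrac12$, giving $X_n^*\Rightarrow\tfrac12(\delta_{-1}+\delta_1)$; this half uses only $\mathbb{E}X_n^*=0$, $\mathbb{V}X_n^*=1$.

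For necessity: if $X_n^*\Rightarrow\mathscr{N}(0,1)$, Fatou's lemma gives $\liminf\mathbb{E}(X_n^*)^4\ge3$, which with the bound $\mathbb{E}(X_n^*)^4<3$ forces $\mathbb{E}(X_n^*)^4\to3$; if $X_n^*\Rightarrow\tfrac12(\delta_{-1}+\delta_1)$, Fatou points the wrong way, so I would instead use uniform integrability: $|\tilde\kappa_r|\le c_r\sum_jp_j^{r/2}\le c_r\sum_jp_j^{3/2}\le c_r$ for all $r\ge3$ makes every normalized central moment $\mathbb{E}(X_n^*)^{2m}$ bounded, so $\{(X_n^*)^4\}$ is bounded in $L^{3/2}$, hence uniformly integrable, and $(X_n^*)^4\Rightarrow1$ gives $\mathbb{E}(X_n^*)^4\to1$. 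The main obstacle I anticipate is the exact evaluation $\kappa_4^{(j)}=\sigma_j^2-6\sigma_j^4$ together with $\sigma_j^2\ge\tfrac14$: this is the one essential use of root-unitarity, it confines $\mathbb{E}(X_n^*)^4$ to $[1,3)$, and it reduces both extremal regimes to a Lindeberg-type negligibility condition on the variance masses $p_j$ (the supporting delicate point being the structural inequality $\sum_jp_j^2\ge\tfrac1{4\sigma^2}$, which forces $\sigma^2\to\infty$ in the Gaussian case); everything afterwards is routine bookkeeping with moments and cumulants.
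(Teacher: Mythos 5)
Your proposal is correct, and the skeleton coincides with the paper's: the identity $\mathbb{E}(X_n^*)^4=3+\kappa_4/\sigma^4$, the strict negativity of $\kappa_4$ coming from a per-root variance bounded below by a constant, Cauchy--Schwarz for the lower bound, and $\mathbb{V}\bigl((X_n^*)^2\bigr)=\mathbb{E}(X_n^*)^4-1$ for the Bernoulli direction. But your execution differs in two ways that are worth recording. First, you attach a (complex) cumulant to each individual root via $f_\rho(s)=\log\frac{e^s-\rho}{1-\rho}$, getting $\kappa_2^{(j)}=|1-\rho_j|^{-2}=\sigma_j^2\ge\tfrac14$ and $\kappa_4^{(j)}=\sigma_j^2-6\sigma_j^4$ (both identities check out: with $|\rho|=1$ one has $(1-\rho)^2=-\rho|1-\rho|^2$, which makes these expressions real, and summing your $\kappa_4^{(j)}$ over a conjugate pair reproduces the paper's $\frac{1}{1-\cos\phi_j}-\frac{3}{(1-\cos\phi_j)^2}$). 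The paper instead groups roots into conjugate pairs, which forces it to treat even degree first and then reduce odd degree to even by multiplying by $1+z$ --- an entire subsection of bookkeeping that your per-root decomposition eliminates, at the mild cost of justifying that the per-root cumulants have the form $Q_r(\rho)/(1-\rho)^r$ (an easy induction on the differentiation) so that $|\kappa_r^{(j)}|\le c_r\sigma_j^r$. Second, for the Gaussian sufficiency the paper proves a quantitative estimate for the moment generating function, $\exp\bigl(s^2/2+O(|s|^3/\sigma_n+\omega_n|s|^4)\bigr)$, valid for complex $s$ in a growing disc, plus a separate uniform mgf bound to get moment convergence; you instead run the pure cumulant method, using the clean chain $\sum_jp_j^2\le\tfrac12\bigl(3-\mathbb{E}(X_n^*)^4\bigr)$ (which rests on $\sum_j\sigma_j^4\ge\sigma^2/4$, i.e.\ again on $\sigma_j^2\ge\tfrac14$) to force $\max_jp_j\to0$ and hence $\tilde\kappa_r\to0$ for $r\ge3$. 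The two are essentially resummations of each other, but yours is more elementary and delivers moment convergence and the necessity direction (via $|\tilde\kappa_r|\le c_r$ uniformly, hence uniform integrability of $(X_n^*)^4$) from the same bound. Your Bernoulli argument replaces the paper's appeal to the exact symmetry of the distribution by uniform integrability plus $\mathbb{E}X_n^*=0$, which is marginally more robust. No gaps; the one step you should write out in a final version is the inductive form $\kappa_r^{(j)}=Q_r(\rho_j)/(1-\rho_j)^r$ with $Q_r$ independent of $\rho_j$, since everything downstream leans on it.
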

This theorem implies that Gaussian and Bernoulli distributions are
in a certain sense extremal limit laws for the distribution of
$X_n$, maximizing and minimizing asymptotically the value of
the fourth moment $\mathbb{E}\left(X_n^*\right)^4$, respectively,
with other limit laws lying in between.

A standard example where Gaussian limit law arises is the number of
inversions in random permutations (or Kendall's $\tau$-statistic)
\[
    P_{\binom{n}2}(z)
    = \prod_{1\le k\le n}\frac{1+z+\cdots + z^{k-1}}{k}.
\]
A straightforward calculation shows that the fourth normalized
central moment has the form
\[
    3- \frac{9(6n^2+15n+16)}{25n(n-1)(n+1)},
\]
which implies the asymptotic normality by
Theorem~\ref{limit-law}; see \cite{feller45a}, Section
\ref{sec-app-i} for more details and examples.

On the other hand, a Bernoulli limit law results from the simple
example
\[
    P_{2n}(z)=\frac{1+z^{2n}}{2}.
\]

It is then natural to ask to which limit laws other than normal and
Bernoulli can the sequence of random variables $X_n^*$ converge. The
simplest such example is the uniform distribution
\[
    P_{2n}(z)=\frac{1+z+z^2+\cdots +z^{2n}}{2n+1};
\]
or, more generally, the finite sum of uniform distributions
\[
    P_{n_1+\cdots+n_k}(z)
    =\prod_{1\le j\le k}
    \frac{1+z+\cdots+z^{2n_j}}{2n_j+1}.
\]

Observe that the moment generating functions of the above three
distributions have the following representations.
\begin{description}
\item[--] Normal distribution: $e^{s^2/2}$;
\item[--] Bernoulli distribution (assuming $\pm1$ with equal
probability):
\[
    \frac{e^s+e^{-s}}{2} = \cos(is)
    =\prod_{k\ge1}\left(1+\frac{4s^2}{(2k-1)^2\pi^2}\right);
\]
\item[--] Uniform distribution (with zero mean and unit variance):
\[
    \frac{1}{2\sqrt{3}}
    \int_{\sqrt{3}}^{\sqrt{3}}e^{xs}\dd x
    =\frac{\sin(\sqrt{3}\,is)}{\sqrt{3}\,is}
    =\prod_{k\ge1}\left(1+\frac{3s^2}{k^2\pi^2}\right).
\]
\end{description}
Here we used the well-known expansions (see \cite{titchmarsh75a})
\begin{align*}
    \cos s = \prod_{k\ge1}
    \left(1-\frac{4s^2}{(2k-1)^2\pi^2}\right),\qquad
    \frac{\sin s}{s} =\prod_{k\ge1}
    \left(1-\frac{s^2}{k^2\pi^2}\right).
\end{align*}
We show that these are indeed special cases of a more general
representation theorem for the limit laws.

\begin{thm} \label{hadamard}
Let $\{X_n\}$ be a sequence of random variables whose probability
generating functions are polynomials with only roots of modulus one.
If the sequence $\{X_n^*\}$ converges to some limit distribution
$X$, then the moment generating function of $X$ is finite and has
the infinite-product representation
\begin{align}\label{Exs-ip}
    \mathbb{E}(e^{Xs})=e^{qs^2/2}
    \prod_{k\ge1}\left(1+\frac{q_k}2\,s^2\right),
\end{align}
where $q$ and the sequence $\{q_k\}$ are all non-negative numbers
such that
\[
    q+\sum_{k\ge1} q_k=1.
\]
\end{thm}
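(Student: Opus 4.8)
\emph{Proof plan.} The strategy is to reduce the assertion to a Hadamard factorization of the limiting moment generating function. Two structural features of a probability generating function $P_n$ with only unit roots will be used throughout: its non-real roots occur in conjugate pairs, the remaining roots all being equal to $-1$ (note that $P_n(1)=1$ rules out $1$ as a root); and $P_n$ is self-reciprocal, so that $X_n$ is symmetric about its mean $n/2$. The first step is a product formula for $G_n(s):=\mathbb{E}(e^{sX_n^*})$. Grouping a conjugate pair $\{e^{\pm i\theta}\}$ and substituting $z=e^{s/\sigma_n}$ in $P_n$ converts that pair into the factor $1+v\sinh^2(s/2\sigma_n)$ with $v=2/(1-\cos\theta)\ge1$, while a leftover simple root $-1$ contributes $\cosh(s/2\sigma_n)$, and the exponential prefactor so produced is exactly $e^{\mathbb{E}(X_n)s/\sigma_n}$, which cancels the centering. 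Hence
\[
   G_n(s)=\Bigl(\cosh\tfrac{s}{2\sigma_n}\Bigr)^{\varepsilon_n}
   \prod_{j=1}^{m_n}\Bigl(1+v_j^{(n)}\sinh^2\tfrac{s}{2\sigma_n}\Bigr),\qquad\varepsilon_n\in\{0,1\},
\]
and differentiating twice at $0$ gives the normalization $\tfrac14\varepsilon_n+\tfrac12\sum_j v_j^{(n)}=\sigma_n^2$; since $v_j^{(n)}\ge1$ and $2m_n+\varepsilon_n=n$, this also forces $\sigma_n^2\ge(n-1)/4\to\infty$ (if the degrees stayed bounded a finite, easier variant of what follows would apply, so I may assume $\sigma_n\to\infty$). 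Writing $w_j^{(n)}:=v_j^{(n)}/(2\sigma_n^2)\in(0,1]$, we then have $\sum_j w_j^{(n)}=1-\varepsilon_n/(4\sigma_n^2)\to1$.

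Next I would upgrade convergence in distribution to convergence of moment generating functions. From $\log(1+x)\le x$, $\log\cosh y\le y^2/2$ and $\sinh^2 y\le Cy^2$ on $|y|\le1$, the product formula gives $\log G_n(s)\le C's^2$ uniformly for $s$ in a fixed interval and $n$ large; applying this to $\pm s$ bounds $\sup_n\mathbb{E}(e^{s|X_n^*|})$ for every $s$, so $\{e^{sX_n^*}\}_n$ is uniformly integrable and $G_n(s)\to G(s):=\mathbb{E}(e^{sX})<\infty$ for all real $s$. Since $|G_n(s)|\le G_n(\operatorname{Re}s)$ the $G_n$ are locally bounded on $\mathbb{C}$, so $G_n\to G$ locally uniformly and $G$ is entire; moreover $\log G_n(s)\le(2\sigma_n^2+\tfrac12)\sinh^2(s/2\sigma_n)\to s^2/2$, so $G(s)\le e^{s^2/2}$ on $\mathbb{R}$ and $G$ has order $\le2$. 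Also $G$ is even with $G(0)=1$, and since all moments converge, $[\log G]''(0)=\mathbb{V}(X)=1$.

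I would then pass to the accompanying entire functions $H_n(s):=\prod_{j=1}^{m_n}(1+\tfrac12 w_j^{(n)}s^2)$, whose zeros all lie on the imaginary axis. Using $\sinh^2(s/2\sigma_n)=\tfrac{s^2}{4\sigma_n^2}(1+O(s^2/\sigma_n^2))$, the bound $|\log(1+a)-\log(1+b)|\le|a-b|$ for $a,b\ge0$, and $\sum_j w_j^{(n)}\le1$, one gets $\log H_n(s)-\log G_n(s)=O(s^4/\sigma_n^2)\to0$ uniformly on compact real sets; with local boundedness of $\{H_n\}$ and the identity theorem this upgrades to $H_n\to G$ locally uniformly on $\mathbb{C}$. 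By Hurwitz's theorem every zero of $G$ is purely imaginary, say $\{\pm i\tau_k\}$ with $0<\tau_1\le\tau_2\le\cdots$. For each $N$, choosing $R$ with no zero of $G$ on $|s|=R$ and exactly $\pm i\tau_1,\dots,\pm i\tau_N$ inside, Hurwitz/Rouch\'e forces $H_n$ to have exactly $2N$ zeros in $|s|<R$ for $n$ large; as the zeros of $H_n$ are $\pm i\sqrt{2/w_j^{(n)}}$, these come from the $N$ largest weights $w_{(k)}^{(n)}$, with $w_{(k)}^{(n)}\to 2/\tau_k^2$. Hence $\sum_{k\le N}2/\tau_k^2=\lim_n\sum_{k\le N}w_{(k)}^{(n)}\le\lim_n\sum_j w_j^{(n)}=1$, and $N\to\infty$ gives $\sum_k 2/\tau_k^2\le1$; in particular $\sum_k\tau_k^{-2}<\infty$. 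Now Hadamard's theorem applied to the even order-$\le2$ function $G$ with $G(0)=1$ and purely imaginary zeros $\pm i\tau_k$, the genus-$\le1$ convergence factors cancelling in $\pm$ pairs, yields $G(s)=e^{\alpha s^2}\prod_k(1+s^2/\tau_k^2)$; putting $q_k:=2/\tau_k^2\ge0$ and $q:=2\alpha$ this is $\mathbb{E}(e^{sX})=e^{qs^2/2}\prod_{k\ge1}(1+\tfrac12 q_k s^2)$ with $\sum_k q_k\le1$, and differentiating twice at $0$ gives $q+\sum_k q_k=1$, hence $q\ge0$, as claimed.

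I expect the third part to be the main obstacle. That the zeros of $G$ remain on the imaginary axis is a clean application of Hurwitz's theorem, but it hinges on identifying the right approximating sequence $H_n$ and establishing the somewhat delicate estimate $\log H_n-\log G_n\to0$; and controlling the total limiting mass $\sum_k 2/\tau_k^2\le1$ — without which the infinite product need not converge and the Gaussian weight $q$ could fail to be non-negative — requires tracing the zeros of $H_n$ back to the weights $w_j^{(n)}$ via a Rouch\'e count, where the bookkeeping with multiplicities and the choice of the radius $R$ is the fussy point. The rest (the product formula, the uniform exponential-moment bound, the final matching of coefficients) is routine.
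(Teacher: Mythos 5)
Your argument is correct, and it coincides in spirit with the \emph{alternative} proof the paper sketches (via Hadamard factorization), not with the proof the paper actually carries out in detail. The paper's main proof is more elementary: it converts moment convergence into cumulant convergence, writes $\sigma_n^{-2m}S_{n,m}=\int_0^1 x^{m-1}\,\mathrm{d}F_n(x)$ for the discrete measures $F_n$ built from exactly your weights $w_j^{(n)}=1/(\sigma_n^2(1-\cos\phi_j))$, deduces weak convergence $F_n\Rightarrow F$ with $F$ supported on $[0,1]$ and total mass $1$, observes that $F$ restricted to $[\varepsilon,1]$ can only consist of finitely many atoms (each atom of $F_n$ located at $x$ has mass a multiple of $x$), and reads off \eqref{Exs-ip} from $\log\mathbb{E}(e^{Xs})=\int_0^1 x^{-1}\log(1+xs^2/2)\,\mathrm{d}F(x)$. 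That route gets $q\ge0$ and $q+\sum_k q_k=1$ for free, since they are just the decomposition of the unit mass of $F$ into its jump at $0$ and its atoms. Your route buys the same conclusion at the price of complex-analytic machinery, but you should be given credit for closing a genuine gap in the paper's own Hadamard sketch: the paper asserts the product form with $A+\sum t_k^{-2}=1$ without ever proving $\sum t_k^{-2}\le 1$ (equivalently $A\ge0$, i.e.\ $q\ge0$) or the convergence of $\sum t_k^{-2}$ needed to drop the genus-two convergence factors; your Hurwitz/Rouch\'e bookkeeping, tracing the zeros of the accompanying products $H_n$ back to the weights $w_j^{(n)}$ and bounding the limiting mass by $\limsup_n\sum_j w_j^{(n)}=1$, is precisely what is needed there, and it is essentially a zero-counting reformulation of the paper's weak-convergence step. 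Two small points to tidy up: the theorem as stated does not fix the degrees, so your parenthetical reduction to the case $\sigma_n\to\infty$ should be stated as an explicit (and implicitly intended) hypothesis or handled by a one-line compactness remark; and in the Rouch\'e count you should say explicitly that repeated $\tau_k$'s are matched to clusters of weights with the correct multiplicity, which your annulus argument does deliver.
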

The above examples show that $q_k=\frac{8}{\pi^2(2k-1)^2}$ for
Bernoulli distribution and $q_k=\frac{6}{\pi^2k^2}$ for the uniform
distribution. More examples will be discussed below.

It remains open to characterize infinite-product representations of
the form \eqref{Exs-ip} that are themselves the moment generating
functions of limit laws of root-unitary polynomials. On the other
hand, many sufficient criteria for root-unitarity have been proposed
in the literature; see, for example, the book \cite{milovanovic94a}
and the recent papers \cite{lalin12a,suzuki12a} for more information and
references.

This paper is organized as follows. We first prove
Theorem~\ref{limit-law} in the next section when $n$ is even, and
then modify the proof to cover polynomials of odd degrees.
Theorem~\ref{hadamard} is then proved in
Section~\ref{sec-gll}. We then apply the results to many concrete
examples from the literature: Section~\ref{sec-app-i} for normal limit
laws and Section~\ref{sec-app-ii} for non-normal laws. A class of
polynomials with non-normal limit law is included in Appendix 
since the root-unitarity property has not yet been proved.  

\section{Moments and the two extremal limit distributions}

For convenience, we begin by considering (general) polynomials of
even degree with all their roots lying on the unit circle
\[
    P_{2n}(z)=\sum_{0\le k\le 2n}p_kz^k,
\]
where $p_k\ge 0$. To avoid triviality, we assume that not all
$p_k$'s are zero. Observe that if $|\rho|=1$ and $P(\rho)=0$, then
$P(\overline{\rho})=0$. If $\rho=1$, then its multiplicity must be
even since all other roots can be grouped in pairs and are symmetric
with respect to the real line. Thus our polynomials can be factored
as
\[
    P_{2n}(z)
    =\prod_{1\le j\le n}(z-\rho_j)(z-\overline{\rho_j}),
\]
where $|\rho_j|=1$ for $j=1,\dots n$. This factorization implies
that root-unitary polynomials are always self-inversive.
\begin{lem}
The coefficients of a root-unitary polynomial of even degree $2n$
are symmetric with respect to $n$, that is
\[
    p_{n-k}=p_{n+k}\qquad(0\le k\le n).
\]
\end{lem}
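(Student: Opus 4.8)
The plan is to exploit the quadratic factorization already recorded above together with the observation that each quadratic factor is \emph{palindromic}. Since $|\rho_j|=1$ we have $\rho_j\overline{\rho_j}=1$, so
\[
    (z-\rho_j)(z-\overline{\rho_j})=z^2-(\rho_j+\overline{\rho_j})z+1,
\]
which is a self-reciprocal polynomial of degree $2$: replacing $z$ by $1/z$ and multiplying by $z^2$ leaves it unchanged. First I would note that a product of self-reciprocal polynomials is again self-reciprocal, hence $P_{2n}(z)=z^{2n}P_{2n}(1/z)$.

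Alternatively, and perhaps more transparently, I would verify the self-reciprocity of the whole product directly from the factorization:
\[
    z^{2n}P_{2n}(1/z)
    =\prod_{1\le j\le n} z^2\left(\frac1z-\rho_j\right)\left(\frac1z-\overline{\rho_j}\right)
    =\prod_{1\le j\le n}(1-\rho_j z)(1-\overline{\rho_j}z),
\]
and since $\rho_j\overline{\rho_j}=1$ each factor on the right equals $1-(\rho_j+\overline{\rho_j})z+z^2=(z-\rho_j)(z-\overline{\rho_j})$; therefore $z^{2n}P_{2n}(1/z)=P_{2n}(z)$.

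To conclude, I would compare coefficients on the two sides of the identity $z^{2n}P_{2n}(1/z)=P_{2n}(z)$, which gives $p_{2n-k}=p_k$ for $0\le k\le 2n$; substituting $k\mapsto n+k$ yields exactly $p_{n-k}=p_{n+k}$ for $0\le k\le n$.

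There is essentially no serious obstacle here; the only points deserving a word of care are that the identity $\overline{\rho_j}=1/\rho_j$ is precisely what the hypothesis $|\rho_j|=1$ supplies (so that each quadratic factor is genuinely palindromic, not merely self-inversive), and that the possible real roots $\rho_j=\pm1$ are harmless since $(z\mp1)^2$ is palindromic as well. The argument also makes clear that the evenness of the degree is used only to guarantee the clean pairing of all roots into conjugate pairs (with $\rho_j=1$ occurring to even multiplicity), which is exactly the grouping already invoked to write down the factorization.
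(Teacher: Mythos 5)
Your proof is correct and follows essentially the same route as the paper: both establish the self-reciprocity identity $z^{2n}P_{2n}(1/z)=P_{2n}(z)$ from the quadratic factorization using $\rho_j\overline{\rho_j}=1$, and then compare coefficients to get $p_{2n-k}=p_k$. Your only additions are making the use of $|\rho_j|=1$ explicit and noting that real roots $\pm1$ cause no trouble, which are harmless elaborations of the same argument.
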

\begin{proof}
By replacing $z$ by $1/z$, we get
\[
\begin{split}
    \sum_{0\le k\le 2n} p_{2n-k}z^k &=z^{2n}P_{2n}(1/z)
    =\prod_{1\le j\le n}(1-z\rho_j)(1-z\overline{\rho_j})\\
    &=\prod_{1\le j\le n}(z-\rho_j)(z-\overline{\rho_j})
    =P_{2n}(z)=\sum_{0\le k\le 2n}p_kz^k.
\end{split}
\]
Taking the coefficients of $z^k$ on both sides, we obtain
$p_{2n-k}=p_k$ for $0\le k\le 2n$, which proves the lemma.
\end{proof}

\subsection{Random variables, moments and cumulants}

Since the coefficients of $P_{2n}(z)$ are nonnegative, we can define
a random variable $X_{2n}$ by
\[
    \mathbb{E}(z^{X_{2n}})=\frac{P_{2n}(z)}{P_{2n}(1)}.
\]

For convenience, we write $\rho_j=e^{i\phi_j}$ since $|\rho_j|=1$.
Then
\[
    (z-\rho_j)(z-\overline{\rho_j})
    =1-(\rho_j+\overline{\rho_j})z+z^2
    =1-2z\cos\phi_j +z^2.
\]
It follows that
\begin{align}\label{EzX2n}
    \mathbb{E}(z^{X_{2n}})
    =\prod_{1\le j\le n}\frac{1-2z\cos\phi_j+z^2}{2(1-\cos\phi_j)}.
\end{align}
Note that $\phi_j\not=0$ for $1\le j\le n$ since $P_{2n}(1)>0$.

It turns out that the mean values of such random variables are
identically $n$.
\begin{lem} For $n\ge1$
\begin{align}\label{EX2n}
    \mathbb{E}(X_{2n}) =n.
\end{align}
\end{lem}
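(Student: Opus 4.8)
The plan is to exploit the symmetry of the coefficients established in the preceding lemma. Since $p_{n-k}=p_{n+k}$ for $0\le k\le n$, the distribution of $X_{2n}$ is symmetric about $n$. Equivalently, dividing the self-inversive identity $P_{2n}(z)=z^{2n}P_{2n}(1/z)$ (proved in the lemma) by $P_{2n}(1)$ gives $\mathbb{E}(z^{X_{2n}})=z^{2n}\,\mathbb{E}(z^{-X_{2n}})$, which is just the statement that $X_{2n}$ and $2n-X_{2n}$ are identically distributed. Taking expectations then yields $\mathbb{E}(X_{2n})=2n-\mathbb{E}(X_{2n})$, hence $\mathbb{E}(X_{2n})=n$.

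Alternatively, and this is the route one might prefer to write out since it does not reuse the coefficient symmetry, one can differentiate the product representation \eqref{EzX2n} directly at $z=1$. Writing $f_j(z)=\dfrac{1-2z\cos\phi_j+z^2}{2(1-\cos\phi_j)}$, one first checks $f_j(1)=\dfrac{2-2\cos\phi_j}{2(1-\cos\phi_j)}=1$ (this also confirms that the normalization is legitimate and uses $\phi_j\neq0$), and then $f_j'(z)=\dfrac{2z-2\cos\phi_j}{2(1-\cos\phi_j)}$, so that $f_j'(1)=1$ as well. By the logarithmic-derivative rule for products,
\[
    \mathbb{E}(X_{2n})=\left.\frac{d}{dz}\prod_{1\le j\le n}f_j(z)\right|_{z=1}
    =\sum_{1\le j\le n}\frac{f_j'(1)}{f_j(1)}=\sum_{1\le j\le n}1=n .
\]

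There is essentially no obstacle here: the identity is a one-line consequence of either the palindromic symmetry of the coefficients or a direct first-derivative computation. The only point needing (very minor) care is that $P_{2n}(1)=\prod_{1\le j\le n}2(1-\cos\phi_j)\neq0$, which is guaranteed by the already-recorded fact that $\phi_j\neq0$ for all $j$; this also ensures each $f_j$ and its derivative are well defined at $z=1$.
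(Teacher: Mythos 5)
Your proof is correct and the second route (differentiating the product representation \eqref{EzX2n} at $z=1$) is exactly the paper's own argument, just written out in detail; the computation $f_j(1)=f_j'(1)=1$ is right. The symmetry argument you give first is an equally valid alternative, already implicit in the paper's surrounding discussion of self-inversive polynomials.
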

\begin{proof}
By \eqref{EzX2n}, take derivative with respect to $z$ and then
substitute $z=1$.
\end{proof}
The relation \eqref{EX2n} indeed holds more generally for 
self-inversive polynomials; see, for example, \cite{sheil-small02a}.
\begin{cor} All odd central moments of $X_{2n}$ are zero
\[
    \mathbb{E}\bigl(X_{2n}-n\bigr)^{2m+1} = 0\qquad
    (m=0,1,\dots).
\]
\end{cor}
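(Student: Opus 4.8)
The plan is to deduce this immediately from the coefficient symmetry $p_{n-k}=p_{n+k}$ of the preceding lemma, which is exactly the statement that the law of $X_{2n}$ is symmetric about its mean $n$ (recall $\mathbb{E}(X_{2n})=n$ by~\eqref{EX2n}, and that $P_{2n}(1)>0$ since $\phi_j\ne0$, so that $X_{2n}$ is well defined and bounded). First I would write, for $0\le k\le n$,
\[
    \mathbb{P}(X_{2n}=n-k)=\frac{p_{n-k}}{P_{2n}(1)}
    =\frac{p_{n+k}}{P_{2n}(1)}=\mathbb{P}(X_{2n}=n+k),
\]
which shows that $X_{2n}-n$ and $n-X_{2n}=-(X_{2n}-n)$ have the same distribution. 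Hence for every $m\ge0$ the random variables $(X_{2n}-n)^{2m+1}$ and $-(X_{2n}-n)^{2m+1}$ are equidistributed, and since all moments exist ($X_{2n}$ is bounded), taking expectations yields $\mathbb{E}(X_{2n}-n)^{2m+1}=-\mathbb{E}(X_{2n}-n)^{2m+1}$, so the common value is $0$.

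An equivalent generating-function argument proceeds from the self-inversiveness $P_{2n}(z)=z^{2n}P_{2n}(1/z)$: dividing by $P_{2n}(1)$ and setting $z=e^t$ gives that $t\mapsto\mathbb{E}(e^{t(X_{2n}-n)})$ is an even entire function of $t$, whence the coefficient of $t^{2m+1}$ in its Taylor expansion, namely $\mathbb{E}(X_{2n}-n)^{2m+1}/(2m+1)!$, vanishes for all $m$.

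There is no real obstacle here; the one point worth stating cleanly is the passage from the palindromic identity of the lemma to the distributional symmetry of $X_{2n}$, after which the result is immediate. (In fact the same argument shows that the odd central moments vanish for the coefficients of any self-inversive polynomial with nonnegative coefficients, not merely root-unitary ones.)
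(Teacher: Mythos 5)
Your proof is correct and follows exactly the paper's route: the paper also deduces the vanishing of the odd central moments directly from the coefficient symmetry $p_{n-k}=p_{n+k}$ (i.e.\ the distributional symmetry of $X_{2n}$ about its mean $n$), merely stating this in one line where you spell out the details. Your filling-in of the argument, and the remark that it extends to any self-inversive polynomial with nonnegative coefficients, are both accurate.
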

\begin{proof}
This follows from the symmetry of the coefficients $p_k$.
\end{proof}

For even moments, we look at the cumulants, which are defined as
\[
    \mathbb{E}(e^{(X_{2n}-n)s})
    =\exp\left(\sum_{m\ge1}
    \frac{\kappa_{m}(n)}{m!}\,s^{m}\right),
\]
where $\kappa_{2m+1}(n)=0$.
\begin{lem} The $2m$-th cumulant $\kappa_{2m}(n)$ of $X_{2n}$ is
given by
\begin{align} \label{kappa-2m}
    \kappa_{2m}(n) = (2m)!\sum_{1\le k\le m}
    \frac{(-1)^{k-1}}{k2^k}\,h_{m,k}S_{n,k}\qquad(m\ge1),
\end{align}
where $2^{2k}\sinh^{2k}(s/2)=\sum_{m\ge k} h_{m,k}s^{2m}$, with
$h_{k,k}=1$, and
\[
    S_{n,k}:=\sum_{1\le j\le n}\frac{1}{(1-\cos \phi_j)^k}.
\]
\end{lem}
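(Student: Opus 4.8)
The plan is to work directly with the bilateral moment generating function of the centered variable and to massage it into a product whose logarithm expands termwise into the claimed series. Starting from \eqref{EzX2n}, I would substitute $z=e^s$ and factor the deterministic part $e^{ns}$ out of $\mathbb{E}(e^{X_{2n}s})$ so that it cancels against the $e^{-ns}$ coming from the centering. The key algebraic simplification is
\[
    1-2e^s\cos\phi_j+e^{2s}=e^s\bigl(2\cosh s-2\cos\phi_j\bigr),
\]
so that, after the cancellation of $e^{ns}\cdot e^{-ns}$,
\[
    \mathbb{E}\bigl(e^{(X_{2n}-n)s}\bigr)
    =\prod_{1\le j\le n}\frac{\cosh s-\cos\phi_j}{1-\cos\phi_j}
    =\prod_{1\le j\le n}\left(1+\frac{2\sinh^2(s/2)}{1-\cos\phi_j}\right),
\]
where I have used $\cosh s-\cos\phi_j=(\cosh s-1)+(1-\cos\phi_j)$ together with $\cosh s-1=2\sinh^2(s/2)$. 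Each denominator $1-\cos\phi_j$ is strictly positive since $\phi_j\not=0$ (because $P_{2n}(1)>0$), and the product is manifestly even in $s$, consistent with the vanishing of the odd cumulants.

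Next I would take logarithms on both sides. Matching against the definition of the cumulants gives
\[
    \sum_{m\ge1}\frac{\kappa_{2m}(n)}{(2m)!}\,s^{2m}
    =\sum_{1\le j\le n}\log\left(1+\frac{2\sinh^2(s/2)}{1-\cos\phi_j}\right).
\]
Expanding $\log(1+x)=\sum_{k\ge1}(-1)^{k-1}x^k/k$ with $x=2\sinh^2(s/2)/(1-\cos\phi_j)$ and summing over $j$ turns the right-hand side into
\[
    \sum_{k\ge1}\frac{(-1)^{k-1}2^k}{k}\,\sinh^{2k}(s/2)\,S_{n,k},
\]
with $S_{n,k}=\sum_{1\le j\le n}(1-\cos\phi_j)^{-k}$ exactly as in the statement. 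Substituting the defining expansion $\sinh^{2k}(s/2)=2^{-2k}\sum_{m\ge k}h_{m,k}s^{2m}$ and interchanging the two summations, then comparing coefficients of $s^{2m}$, produces precisely \eqref{kappa-2m}; the normalization $h_{k,k}=1$ falls out of $2\sinh(s/2)=s+O(s^3)$.

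The computation is routine once the product form is in hand, so the only point needing a word of care is the justification of the termwise logarithm and the double-series rearrangement. I would dispose of this with the observation that $\sinh^{2k}(s/2)$ starts at order $s^{2k}$, so the coefficient of $s^{2m}$ receives contributions only from the finitely many indices $1\le k\le m$; hence the manipulation is valid as an identity of formal power series (equivalently, for $|s|$ small enough that every $\log$-series in the product converges). In short, the "hard part" is simply recognizing the identity $1-2e^s\cos\phi_j+e^{2s}=e^s(2\cosh s-2\cos\phi_j)$, which makes the deterministic factor $e^{ns}$ disappear and exposes the clean product $\prod_j\bigl(1+2\sinh^2(s/2)/(1-\cos\phi_j)\bigr)$ from which everything else follows mechanically.
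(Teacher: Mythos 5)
Your proposal is correct and follows essentially the same route as the paper: both reduce $\mathbb{E}(e^{(X_{2n}-n)s})$ to the product $\prod_{j}\bigl(1+2\sinh^2(s/2)/(1-\cos\phi_j)\bigr)$ (the paper writes the factor as $e^s+(e^s-1)^2/(2(1-\cos\phi_j))$, which is the same identity in a different dress) and then extracts \eqref{kappa-2m} by expanding the logarithm. You merely spell out the ``direct expansion'' and the finiteness of the coefficient extraction that the paper leaves implicit.
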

\begin{proof}
By \eqref{EzX2n}, we have
\begin{align*}
    \log\frac{1-2e^s\cos\phi+e^{2s}}{2(1-\cos\phi)}
    &=\log\left(e^s+\frac{(e^s-1)^2}{2(1-\cos\phi)}\right)\\
    &=s+\log\left(1+2\frac{\sinh^2(s/2)}{1-\cos\phi}\right).
\end{align*}
Thus
\[
    \log\mathbb{E}(e^{(X_{2n}-n)s})
    =\sum_{1\le j\le n} \log\left(1+2\frac{\sinh^2(s/2)}
    {(1-\cos\phi_j)}\right),
\]
which implies \eqref{kappa-2m} by a direct expansion.
\end{proof}

\subsection{Variance and fourth central moment}

In particular, we obtain, from \eqref{kappa-2m},
\begin{align}
    \sigma_n^2:=\mathbb{V}(X_{2n})
    =\kappa_2(n)
    =\sum_{1\le j\le n}\frac{1}{1-\cos\phi_j}.
    \label{kappa-2}
\end{align}
\begin{lem} The variance satisfies the inequalities
\begin{align} \label{ineq-var}
    \frac n2 \le \sigma_n^2 \le n^2.
\end{align}
\end{lem}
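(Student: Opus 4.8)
The plan is to treat the two inequalities separately: the lower bound follows from a termwise estimate of the sum \eqref{kappa-2}, while the upper bound comes not from that sum at all but from the boundedness of the support of $X_{2n}$.

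For the lower bound, I would invoke the fact (noted right after \eqref{EzX2n}) that $\phi_j\ne0$ for $1\le j\le n$ because $P_{2n}(1)>0$. Hence $-1\le\cos\phi_j<1$, so $0<1-\cos\phi_j\le2$, and therefore each summand in \eqref{kappa-2} satisfies $\tfrac1{1-\cos\phi_j}\ge\tfrac12$. Summing over the $n$ indices gives $\sigma_n^2\ge \tfrac n2$; the estimate is sharp in the limit $\rho_j\to-1$.

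For the upper bound, the key observation is that $X_{2n}$ is a genuine random variable supported on $\{0,1,\dots,2n\}$, since its probability generating function $P_{2n}(z)/P_{2n}(1)$ is a polynomial of degree $2n$ with nonnegative coefficients summing to $1$; and by \eqref{EX2n} it has mean $n$. Consequently $|X_{2n}-n|\le n$ with probability one, so
\[
    \sigma_n^2=\mathbb{E}\bigl(X_{2n}-n\bigr)^2\le n^2 .
\]
(Equivalently, Popoviciu's inequality for the variance of a variable confined to an interval of length $2n$ yields the same bound.)

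Both bounds are essentially best possible at this level of generality — the upper one is attained exactly by $P_{2n}(z)=(1+z^{2n})/2$ — so no sharpening is to be expected. The only mildly subtle point is realizing that the upper estimate \emph{cannot} be obtained by bounding the terms of \eqref{kappa-2} individually, since $1/(1-\cos\phi_j)$ is unbounded as $\phi_j\to0$; one must instead use the global constraint that $X_{2n}$ lives on a bounded range, which is exactly the feature distinguishing root-unitary polynomials from those with unbounded real zeros.
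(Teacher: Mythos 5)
Your proof is correct and follows essentially the same route as the paper: the lower bound by the termwise estimate $1-\cos\phi_j\le 2$ in \eqref{kappa-2}, and the upper bound from the bounded support $\{0,1,\dots,2n\}$ with mean $n$, giving $\mathbb{V}(X_{2n})=\frac1{P_{2n}(1)}\sum_k p_k(k-n)^2\le n^2$. (Minor aside: the lower bound is attained exactly, not merely in the limit, e.g.\ by $P_{2n}(z)=(1+z)^{2n}$.)
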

\begin{proof}
The lower bound follows from \eqref{kappa-2} and the inequality
$1-\cos\phi_j\le 2$. The upper bound is also straightforward
\[
    \mathbb{V}(X_{2n})= \frac1{P_{2n}(1)}
    \sum_{0\le k\le 2n}p_k(k-n)^2\le n^2,
\]
which shows that the distance of the unit zeroes of $P_{2n}(z)$ to
the point $1$ is always larger than $c/n$, where $c>0$ is an
absolute constant.
\end{proof}

On the other hand, by the elementary inequalities
\begin{align} \label{ineq-cos}
    \frac2{\pi^2}\,t^2\le 1-\cos t \le
    \frac{t^2}2 \qquad(t\in[-\pi,\pi]),
\end{align}
we have
\[
    2 \le \frac{\sigma_n^2}
    {\sum_{1\le j\le n}\phi_j^{-2}}
    \le \frac{\pi^2}2.
\]

We now turn to the fourth central moment. Define
\begin{align} \label{omega-n}
    \omega_n :=\frac{1}{\sigma_n^4}\sum_{1\le j\le n}
    \frac{1}{(1-\cos\phi_j)^2}.
\end{align}
\begin{lem} $(i)$ For $n\ge1$,
\begin{align}\label{4th-mm-ub}
    1\le \mathbb{E}\left(\frac{X_{2n}-n}{\sigma_n}\right)^4
    \le 3-\frac{1}{2\sigma_n^2}<3.
\end{align}
$(\text{ii})$
\begin{align} \label{4th-mm-iff}
    \mathbb{E}\left(\frac{X_{2n}-n}{\sigma_n}\right)^4
    \to 3 \quad\text{iff}\quad \omega_n \to0.
\end{align}
\end{lem}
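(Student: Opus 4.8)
The plan is to turn the fourth normalized central moment into an exact rational expression in $\sigma_n^2$ and $\omega_n$, and then read off both the bounds in \eqref{4th-mm-ub} and the equivalence in \eqref{4th-mm-iff}.

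First I would pass from central moments to cumulants through the standard identity $\mathbb{E}(X_{2n}-n)^4=\kappa_4(n)+3\kappa_2(n)^2$, valid because $X_{2n}-n$ is centred, so that $\mathbb{E}\bigl((X_{2n}-n)/\sigma_n\bigr)^4 = 3+\kappa_4(n)/\sigma_n^4$. The value of $\kappa_4(n)$ comes from \eqref{kappa-2m} with $m=2$, which requires only the two coefficients $h_{2,2}=1$ (given) and $h_{2,1}$; the latter is read off from $4\sinh^2(s/2)=2(\cosh s-1)=s^2+\tfrac1{12}s^4+\cdots$, so $h_{2,1}=\tfrac1{12}$. Substituting into \eqref{kappa-2m} gives $\kappa_4(n)=S_{n,1}-3S_{n,2}$; since $S_{n,1}=\sigma_n^2$ by \eqref{kappa-2} and $S_{n,2}=\sigma_n^4\omega_n$ by \eqref{omega-n}, this yields the key identity
\[
    \mathbb{E}\left(\frac{X_{2n}-n}{\sigma_n}\right)^4 = 3+\frac1{\sigma_n^2}-3\omega_n = 3-\left(3\omega_n-\frac1{\sigma_n^2}\right).
\]

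For part (i), the lower bound $\ge 1$ is immediate from Cauchy--Schwarz (equivalently, Jensen applied to $t\mapsto t^2$) since $\mathbb{E}\bigl((X_{2n}^*)^2\bigr)=1$. For the upper bound I would invoke the trivial pointwise inequality $1-\cos\phi_j\le 2$, which gives $(1-\cos\phi_j)^{-2}\ge\tfrac12(1-\cos\phi_j)^{-1}$ and hence $S_{n,2}\ge\tfrac12 S_{n,1}$, i.e.\ $\omega_n\ge\tfrac1{2\sigma_n^2}$; plugging this into the key identity produces the bound $\le 3-\tfrac1{2\sigma_n^2}<3$. Part (ii) then follows from the same identity together with the sandwich $\omega_n\le 3\omega_n-\tfrac1{\sigma_n^2}\le 3\omega_n$, whose left inequality is precisely $\omega_n\ge\tfrac1{2\sigma_n^2}$ and whose right inequality is $\sigma_n^{-2}\ge0$: the deficit $3\omega_n-\sigma_n^{-2}$ is squeezed between $\omega_n$ and $3\omega_n$, so it tends to $0$ if and only if $\omega_n$ does, which is exactly \eqref{4th-mm-iff}.

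The computation is essentially routine, so the only place requiring care is the bookkeeping: correctly evaluating $h_{2,1}=\tfrac1{12}$, assembling $\kappa_4(n)$ from \eqref{kappa-2m}, and keeping the moment--cumulant relation $\mu_4=\kappa_4+3\kappa_2^2$ straight. Once the clean identity $\mathbb{E}(X_{2n}^*)^4=3+\sigma_n^{-2}-3\omega_n$ is in place, everything else reduces to the two elementary facts $1-\cos\phi_j\le 2$ and $\mathbb{E}\bigl((X_{2n}^*)^2\bigr)=1$.
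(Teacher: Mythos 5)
Your proposal is correct and follows essentially the same route as the paper: both rest on the identity $\mathbb{E}\bigl((X_{2n}-n)/\sigma_n\bigr)^4=3+\sigma_n^{-2}-3\omega_n$ from the cumulant formula \eqref{kappa-2m}, the Cauchy--Schwarz lower bound, and the elementary inequality $1-\cos\phi_j\le 2$ (equivalently $\omega_n\ge\tfrac12\sigma_n^{-2}$) for the upper bound. The only cosmetic difference is in part (ii), where you squeeze the deficit $3\omega_n-\sigma_n^{-2}$ between $\omega_n$ and $3\omega_n$, while the paper instead invokes $\sigma_n^{-2}\le 2/n\to0$; both arguments are valid and hinge on the same inequality.
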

\begin{proof}
By definition and by \eqref{kappa-2m},
\begin{align}\label{fourth_moment}
    \mathbb{E}\left(\frac{X_{2n}-n}{\sigma}\right)^4
    =3+\frac{\kappa_4(n)}{\sigma_n^4}
    =3+\sigma_n^{-2}-3\omega_n.
\end{align}
Now
\[
    \sigma_n^{-2}-3\omega_n
    = -\frac1{\sigma_n^4}\sum_{1\le j\le n}
    \frac{2+\cos\phi_j}{(1-\cos\phi_j)^2}
    \le -\frac1{2\sigma_n^4}\sum_{1\le j\le n}\frac1{1-\cos\phi_j}
    = -\tfrac12\sigma_n^{-2}<0,
\]
proving the upper bound of \eqref{4th-mm-ub}. On the other hand,
since $1/\sigma_n\le \sqrt{2/n}$ (by \eqref{ineq-var}), we see that
\eqref{4th-mm-iff} also holds. It remains to prove the lower bound
of \eqref{4th-mm-ub}, which results directly from the Cauchy-Schwarz
inequality
\[
    1=\mathbb{E}\left(\frac{X_{2n}-n}
    {\sigma_n}\right)^2\le
    \left(\mathbb{E}\left(\frac{X_{2n}-n}
    {\sigma_n}\right)^4\right)^{1/2}.
\]
\end{proof}
By \eqref{ineq-cos}, we can replace the condition $\omega_n\to0$ by
\[
    \sum_{1\le j\le n} \phi_j^{-4}
    =o\Biggl(\Bigl(\sum_{1\le j\le n}
    \phi_j^{-2}\Bigr)^2\Biggr) .
\]
On the other hand, $\mathbb{E}((X_n-n)/\sigma_n)^4\to3$ is
equivalent to $\kappa_4(n)/\kappa_2^2(n)\to0$; the latter condition
is in many cases easier to manipulate.

Note that \eqref{4th-mm-ub} proves \eqref{4th-mm-bds} when $n$ is
even.

\subsection{Estimates for the moment generating functions}

\begin{lem}\label{est-cf}
Assume $\omega_n\to0$.
For all $s\in \mathbb{C}$ such that $|s|\le
\min\{\sigma_n,\omega_n ^{-1/4}\}/4$, we have
\begin{align} \label{mgf-est1}
    \mathbb{E}(e^{(X_{2n}-n)s/\sigma_n})
    =\exp\left(\frac{s^2}{2}+O\left(\frac{|s|^3}
    {\sigma_n}+\omega_n |s|^4\right)\right).
\end{align}
\end{lem}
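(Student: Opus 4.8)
The plan is to start from the identity established in the course of proving the cumulant formula \eqref{kappa-2m} and, after replacing $s$ by $s/\sigma_n$ there, to write
\[
    \log \mathbb{E}\bigl(e^{(X_{2n}-n)s/\sigma_n}\bigr)
    =\sum_{1\le j\le n}\log(1+u_j),
    \qquad
    u_j:=\frac{2\sinh^2(s/(2\sigma_n))}{1-\cos\phi_j} .
\]
Since $\sinh^2(s/(2\sigma_n))=\tfrac14 s^2/\sigma_n^2+O(s^4/\sigma_n^4)$ and $\sum_j(1-\cos\phi_j)^{-1}=\sigma_n^2$ by \eqref{kappa-2}, the linear sum $\sum_j u_j$ is already equal to $\tfrac12 s^2$ up to the error claimed in \eqref{mgf-est1}; the remaining task is to show that the higher-order terms of the logarithmic expansion contribute only $O(\omega_n|s|^4)$.

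The first real step is a \emph{uniform} bound $|u_j|\le\tfrac12$ (say), valid for every $j$ throughout the disk $|s|\le\min\{\sigma_n,\omega_n^{-1/4}\}/4$. Two facts go into it: the elementary estimate $|\sinh^2 w|\le C|w|^2$ for $|w|$ bounded (here $|s/(2\sigma_n)|\le\tfrac18$ because $|s|\le\sigma_n/4$), and the observation, read directly off the definition \eqref{omega-n}, that $\max_{1\le j\le n}(1-\cos\phi_j)^{-1}\le\sigma_n^2\sqrt{\omega_n}$, since $(1-\cos\phi_j)^{-2}$ cannot exceed the whole sum $\sum_k(1-\cos\phi_k)^{-2}=\sigma_n^4\omega_n$. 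Combining the two gives $|u_j|\le C|s|^2\sqrt{\omega_n}$, which drops below $\tfrac12$ once $|s|\le\omega_n^{-1/4}/4$. This is precisely why the hypothesis carries two separate constraints on $|s|$, and I expect this to be the main point requiring care; the rest is bookkeeping with the power series of $\sinh$.

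With $|u_j|\le\tfrac12$ secured, I would Taylor-expand $\log(1+u_j)=u_j-\tfrac12 u_j^2+O(|u_j|^3)$ with an absolute implied constant and sum over $j$. The linear part is $\sum_j u_j=2\sinh^2(s/(2\sigma_n))\,\sigma_n^2=\tfrac12 s^2+O(|s|^4/\sigma_n^2)$, and $|s|^4/\sigma_n^2$ is absorbed into $|s|^3/\sigma_n$ because $|s|\le\sigma_n/4$. The quadratic part is $-\tfrac12\sum_j u_j^2=-2\sinh^4(s/(2\sigma_n))\,\sigma_n^4\omega_n=O(\omega_n|s|^4)$ by \eqref{omega-n}. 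The cubic remainder is bounded crudely by $\sum_j|u_j|^3\le\bigl(\max_j|u_j|\bigr)\sum_j|u_j|^2\le\tfrac12\sum_j|u_j|^2=O(\omega_n|s|^4)$. Adding the three contributions gives $\log\mathbb{E}(e^{(X_{2n}-n)s/\sigma_n})=\tfrac12 s^2+O(|s|^3/\sigma_n+\omega_n|s|^4)$, which is \eqref{mgf-est1}.

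One analytic point deserves a sentence: the displayed logarithmic identity must hold on the entire disk, not merely near $s=0$. This is automatic, because $\mathbb{E}(e^{(X_{2n}-n)s/\sigma_n})=\prod_j(1+u_j)$ as an identity of entire functions (from the factorization \eqref{EzX2n} together with $\frac{1-2e^w\cos\phi+e^{2w}}{2(1-\cos\phi)}=e^w(1+2\sinh^2(w/2)/(1-\cos\phi))$), each factor is nonzero on the disk since $|u_j|<1$ there, and both $\log\mathbb{E}(\cdot)$ and $\sum_j\log(1+u_j)$ are continuous, vanish at $s=0$, and the disk is connected, so they coincide.
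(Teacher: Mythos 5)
Your proposal is correct and follows essentially the same route as the paper: both write $\log\mathbb{E}(e^{(X_{2n}-n)s/\sigma_n})$ as a sum of logarithms of the quadratic factors, both extract the key uniform bound $\max_j(1-\cos\phi_j)^{-1}\le\sigma_n^2\sqrt{\omega_n}$ from the definition \eqref{omega-n}, and both Taylor-expand the logarithm using the two constraints on $|s|$ to control the quadratic and higher remainders by $O(\omega_n|s|^4)$. Your variant (factoring out $e^{s/\sigma_n}$ to work with $u_j=2\sinh^2(s/(2\sigma_n))/(1-\cos\phi_j)$, and bounding the cubic tail by $\max_j|u_j|\cdot\sum_j|u_j|^2$) is just a slightly tidier bookkeeping of the same argument, and your explicit verification that $|u_j|\le\tfrac12$ uniformly on the stated disk is in fact a bit more careful than the paper's corresponding step.
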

\begin{proof}
By \eqref{EzX2n},
\[
    \log\mathbb{E}(e^{X_{2n}s/\sigma_n})
    =\sum_{1\le j\le n}\log\left( 1+(e^{s/\sigma_n}-1)+
    \frac{(e^{s/\sigma_n}-1)^2}{2(1-\cos \phi_j)}\right)
\]
Note that, by \eqref{kappa-2},
\[
    \sigma_n^2\ge \max_{1\le j \le n}\frac{1}{1-\cos \phi_j}.
\]
From the definition \eqref{omega-n} of $\omega_n$, we also have
\[
    \frac{1}{\sigma_n^4}\max_{1\le j \le n}
    \frac{1}{(1-\cos \phi_j)^2}\le \omega_n,
\]
which means that
\begin{align} \label{one-cos}
    \max_{1\le j \le n}\frac{1}{1-\cos \phi_j}
    \le \sigma_n^2\sqrt{\omega_n }.
\end{align}
Thus
\[
    \left|e^{s/\sigma_n}-1 + \frac{
    (e^{s/\sigma_n}-1)^2}{2(1-\cos \phi_j)}
    \right|\le e^{2|s|/\sigma_n}
    \left(\frac{|s|}{\sigma_n}+|s|^2\sqrt{\omega_n }\right)
\]
Since $\omega_n \to 0$ by assumption, the right-hand side is less
than, say $2/3$, for large enough $n$ when $|s|$ remains bounded.
Thus we can use the Taylor expansion of $\log(1+w)$ and obtain
\[
\begin{split}
    &\log\left( 1+(e^{s/\sigma_n}-1) +
    \frac{(e^{s/\sigma_n}-1)^2}{2(1-\cos \phi_j)}\right) \\
    &\qquad =\frac{s}{\sigma_n}
    +\frac{s^2}{2\sigma_n^2(1-\cos \phi_j)}
    +O\left(\frac{|s|^3}{\sigma_n^3(1-\cos\phi_j)}
    +\frac{|s|^4}{\sigma_n^4(1-\cos\phi_j)^2}
    +\frac{|s|^6}{\sigma_n^6(1-\cos\phi_j)^3}\right).
\end{split}
\]
By \eqref{one-cos}
\[
    \frac{|s|^6}{\sigma_n^6(1-\cos\phi_j)^3}
    \le  \frac{|s|^6\sqrt{\omega_n }}{\sigma_n^4(1-\cos\phi_j)^2}.
\]
It follows, after summing over all $j$, that
\[
    \log \mathbb{E}(e^{X_{2n}s/\sigma_n})
    =\frac{s^2}{2}+\frac{ns}{\sigma_n}
    +O\left(\frac{|s|^3}{\sigma_n}+(|s|^4+|s|^6
    \sqrt{\omega_n })\omega_n\right).
\]
Now if $|s|\le\min\{\sigma_n,\omega_n ^{-1/4}\}/4$, then
$\omega_n^{3/2}|s|^6 \leqslant \omega_n|s|^4/16$, and this proves
\eqref{mgf-est1}.
\end{proof}

\begin{lem}\label{est-mgf}
For $s\in \mathbb{R}$, the inequality
\begin{align} \label{mgf-ineq}
    \mathbb{E}(e^{(X_{2n}-n)s/\sigma_n})
    \le \exp\left(\tfrac32\, s^2e^{2s/\sigma_n}\right)
\end{align}
holds.
\end{lem}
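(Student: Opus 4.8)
The plan is to turn the inequality \eqref{mgf-ineq} into an elementary one-variable estimate for $\sinh$. From the identity established in the proof of the cumulant formula \eqref{kappa-2m}, with $s$ there replaced by $s/\sigma_n$, one has
\[
    \log\mathbb{E}\bigl(e^{(X_{2n}-n)s/\sigma_n}\bigr)
    =\sum_{1\le j\le n}\log\left(1+\frac{2\sinh^2(s/(2\sigma_n))}{1-\cos\phi_j}\right).
\]
Every summand is of the form $\log(1+x)$ with $x\ge0$, since $\sinh^2(\cdot)\ge0$ and $1-\cos\phi_j>0$ (recall $\phi_j\ne0$). Applying $\log(1+x)\le x$ term by term and then summing by means of the variance identity \eqref{kappa-2}, $\sum_{1\le j\le n}(1-\cos\phi_j)^{-1}=\sigma_n^2$, yields
\[
    \log\mathbb{E}\bigl(e^{(X_{2n}-n)s/\sigma_n}\bigr)
    \le 2\sinh^2\!\left(\frac{s}{2\sigma_n}\right)\sum_{1\le j\le n}\frac1{1-\cos\phi_j}
    =2\sigma_n^2\sinh^2\!\left(\frac{s}{2\sigma_n}\right).
\]

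It then remains to verify $2\sigma_n^2\sinh^2(s/(2\sigma_n))\le\tfrac32 s^2 e^{2s/\sigma_n}$. For $s\ge0$ I would invoke the elementary bound $\sinh^2 t\le t^2 e^{2t}$ for $t\ge0$ — which follows from $\sinh t\le t e^{t}$ (a term-by-term comparison of the power series of $\sinh t$ and $t e^{t}$, or $\sinh t\le t\cosh t\le t e^{t}$), the squaring being legitimate because both sides are nonnegative — applied with $t=s/(2\sigma_n)$:
\[
    2\sigma_n^2\sinh^2\!\left(\frac{s}{2\sigma_n}\right)
    \le 2\sigma_n^2\cdot\frac{s^2}{4\sigma_n^2}\,e^{s/\sigma_n}
    =\frac{s^2}2\,e^{s/\sigma_n}
    \le\tfrac32 s^2 e^{2s/\sigma_n},
\]
the last step since $\tfrac12\le\tfrac32$ and $e^{s/\sigma_n}\le e^{2s/\sigma_n}$ for $s\ge0$. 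The case $s\le0$ reduces to this one: the coefficients of $P_{2n}$ are symmetric about $n$, so $X_{2n}-n$ is a symmetric random variable and $\mathbb{E}(e^{(X_{2n}-n)s/\sigma_n})=\mathbb{E}(e^{(X_{2n}-n)|s|/\sigma_n})$, which has just been bounded.

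I do not anticipate a genuine obstacle here: the whole argument is the opening reduction to a sum of terms $\log(1+x)$, followed by the two deliberately wasteful inequalities $\log(1+x)\le x$ and $\sinh^2 t\le t^2 e^{2t}$. The only points needing a little attention are that these wasteful bounds still leave enough room — the constant $\tfrac32$ together with the factor $e^{2s/\sigma_n}$ are exactly what makes the last display close — and that $\sinh^2 t\le t^2 e^{2t}$ is valid only for $t\ge0$ (for $t<0$ both sides of $\sinh t\le t e^t$ are negative, so squaring reverses the inequality), which is why the range $s<0$ is handled through the symmetry of $X_{2n}-n$ rather than directly.
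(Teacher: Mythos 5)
Your argument for $s\ge0$ is correct and is in substance the same as the paper's: the paper applies $1+y\le e^y$ to each factor $1+(z-1)+\frac{(z-1)^2}{2(1-\cos\phi_j)}$ with $z=e^{s/\sigma_n}$ and then sums the exponents, which is exactly your $\log(1+x)\le x$ step applied before rather than after pulling out the factor $e^{s/\sigma_n}$. Your version, working with the nonnegative quantities $2\sinh^2(s/(2\sigma_n))/(1-\cos\phi_j)$, is if anything cleaner and gives the slightly sharper intermediate bound $\exp\bigl(2\sigma_n^2\sinh^2(s/(2\sigma_n))\bigr)$; the elementary estimate $\sinh^2 t\le t^2e^{2t}$ for $t\ge0$ and the final comparison are fine.

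The gap is in your last sentence. For $s<0$ the symmetry of $X_{2n}-n$ gives $\mathbb{E}(e^{(X_{2n}-n)s/\sigma_n})=\mathbb{E}(e^{(X_{2n}-n)|s|/\sigma_n})\le\exp\bigl(\tfrac32 s^2e^{2|s|/\sigma_n}\bigr)$, and for $s<0$ this is \emph{weaker} than the asserted bound because $e^{2|s|/\sigma_n}>e^{2s/\sigma_n}$; so ``which has just been bounded'' does not close the argument. In fact nothing can: take $n=1$, $\phi_1=\pi$, so that $P_2(z)=\bigl(\tfrac{1+z}{2}\bigr)^2$, $\sigma_1^2=\tfrac12$ and $\mathbb{E}\bigl(e^{(X_2-1)s/\sigma_1}\bigr)=\cosh^2(s/\sqrt2)$; at $s=-1$ the left-hand side is about $1.59$ while $\exp\bigl(\tfrac32 e^{-2\sqrt2}\bigr)\approx1.09$. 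So the inequality as printed is false for negative $s$ --- the right-hand side should read $e^{2|s|/\sigma_n}$ (the paper's own proof suffers from the same defect, since the steps $e^t-1-t\le\tfrac12 t^2e^t$ and $(e^t-1)^2\le t^2e^{2t}$ it uses both fail for $t<0$). As the lemma is only ever invoked at $s=1$, you should simply state and prove the corrected version with $|s|$, which is exactly what your argument delivers.
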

\begin{proof}
By \eqref{EzX2n} and the elementary inequality $1+y\le e^{y}$ for
real $y$, we obtain
\[
\begin{split}
    \mathbb{E}(z^{X_{2n}})&=\prod_{1\le j\le n}
    \left(1+(z-1)+\frac{(z-1)^2}{2(1-\cos \phi_j)}\right)\\
    &\le \prod_{1\le j\le n}\exp\left(z-1+\frac{(z-1)^2}
    {2(1-\cos \phi_j)}\right)\\
    &=e^{n(z-1)+\sigma_n^2(z-1)^2/2}.
\end{split}
\]
Thus
\[
\begin{split}
    \mathbb{E}(e^{(X_{2n}-n)s/\sigma_n})
    &\le \exp\left(n\left(e^{s/\sigma_n}-1
    -\frac s{\sigma_n}\right)+
    \tfrac12\sigma_n^2(e^{s/\sigma_n}-1)^2\right)\\
    &\le\exp\left(\frac{n}{2\sigma_n^2}
    \,s^2e^{s/\sigma_n}+\frac{s^2}2\, e^{2s/\sigma_n}\right),
\end{split}
\]
and \eqref{mgf-ineq} follows from the inequality $n/\sigma_n^2\le
2$.
\end{proof}

\subsection{Normal limit law}

We now prove the second part of Theorem~\ref{limit-law} in the
case of polynomials of even degree, namely, $\{(X_n-n)/\sigma_n\}$
converges in distribution and with all moments to the standard
normal distribution if and only if 
\[
    \mathbb{E}\left(\frac{X_{2n}-n}{\sigma_n}\right)^4\to3.
\]

\begin{proof} Consider first the sufficiency part. 
By \eqref{4th-mm-iff}, $\omega_n \to 0$, and we can apply the
estimate \eqref{mgf-est1}, implying the convergence in distribution
of $(X_{2n}-n)/\sigma_n$ to $\mathscr{N}(0,1)$.

On the other hand, by Lemma \ref{est-mgf},
\[
    \mathbb{E}(e^{(X_{2n}-n)s/\sigma_n})
    =\sum_{m\ge0}\left(\frac{X_{2n}-n}{\sigma_n}\right)^{2m}
    \frac{s^{2m}}{(2m)!}
    \le e^{\frac32s^2e^{2s/\sigma_n}}.
\]
Taking $s=1$, we conclude that all normalized central moments of 
$X_{2n}$ are bounded above by
\[
    \mathbb{E}\left(\frac{X_{2n}-n}{\sigma_n}\right)^{2m}
    \le (2m)!e^{\frac32e^{2/\sigma_n}}.
\]
Thus we also have convergence of all moments.

For the necessity, we see that if $\{(X_{2n}-n)/\sigma_n\}$ converges 
in distribution to $\mathscr{N}(0,1)$, then the the fact that the
moments of $(X_{2n}-n)/\sigma_n$ are all bounded implies that all
the normalized central moments of $X_{2n}$ converge to the moments of 
the standard normal distribution; in particular, the fourth normalized
central moments converge to $3$.
\end{proof}

\subsection{Bernoulli limit law}

We now examine the case when the fourth moment converges to
the smallest possible value,  that is
\begin{align} \label{4th-mm-1}
    \mathbb{E}\left(\frac{X_{2n}-n}
    {\sigma_n}\right)^4\to 1.
\end{align}
Note that
\[
    \mathbb{V}\left(\frac{X_{2n}-n}
    {\sigma_n}\right)^2=
    \mathbb{E}\left(\left(\frac{X_{2n}-n}
    {\sigma_n}\right)^2-1\right)^2=
    \mathbb{E}\left(\frac{X_{2n}-n}
    {\sigma_n}\right)^4-1.
\]
If \eqref{4th-mm-1} holds, then by Chebyshev's inequality, we see
that
\[
    \mathbb{P}\left(\frac{X_{2n}-n}
    {\sigma_n}\in
    (-1-\varepsilon,-1+\varepsilon)\bigcup
    (1-\varepsilon,1+\varepsilon)\right)\to 1,
\]
for any $\varepsilon>0$. By symmetry of the random variable
$X_{2n}-n$
\[
    \mathbb{P}\left(\frac{X_{2n}-n}
    {\sigma_n}\in (-1-\varepsilon,-1+\varepsilon)\right)
    =\mathbb{P}\left(\frac{X_{2n}-n}
    {\sigma_n}\in (1-\varepsilon,1+\varepsilon)\right).
\]
We conclude that the distributions of $(X_{2n}-n)/\sigma_n$ converge
to a Bernoulli distribution that assumes the two values $1$ and $-1$
with equal probability.

\subsection{Polynomials of odd degree}

To complete the proof of Theorem~\ref{limit-law}, we need to 
address the situation of odd-degree polynomials.

Assume $Q_{2n-1}(z)$ is a root-unitary polynomial of degree $2n-1$
with non-negative coefficients. If we multiply it by the factor
$1+z$, then the resulting polynomial
\[
    P_{2n}(z)=(1+z)Q_{2n-1}(z)
\]
remains root-unitary with non-negative coefficients. This means that
the moment generating functions of the corresponding random
variables $\mathbb{E}(e^{Y_{2n-1}s}):=Q_{2n-1}(e^s)/Q_{2n-1}(1)$ and
$\mathbb{E}(e^{X_{2n}s}):=P_{2n}(e^s)/P_{2n}(1)$ are connected by the
identity
\[
    \mathbb{E}(e^{X_{2n}s})
    =\frac{1+e^s}{2}\,\mathbb{E}(e^{Y_{2n-1}s}).
\]
This leads to the relation
\begin{align} \label{X-Y}
    X_{2n}\stackrel{d}{=}Y_{2n-1}+B,
\end{align}
where $B$ is independent of $Y_{2n-1}$ and takes the values $0$ and
$1$ with equal probability. Thus
\begin{align}
    \mathbb{E}(Y_{2n-1}) &= \mathbb{E}(X_{2n})-\tfrac{1}{2}
    =n-\tfrac12,\nonumber \\
    \mathbb{V}(Y_{2n-1}) &= \mathbb{V}(X_{2n})-\tfrac{1}{4}
    = \sigma_n^2-\tfrac14, \label{V-X-Y}
\end{align}
and
\begin{equation}
\label{Y-4th-mm}
    \mathbb{E}\left(Y_{2n-1}-\mathbb{E}(Y_{2n-1})\right)^4
    = \mathbb{E}(X_{2n}-n)^4 -\tfrac32 \sigma_n^2
    +\tfrac 5{16}.
\end{equation}
Thus we obtain
\begin{align*}
    \mathbb{E}\left(\frac{Y_{2n-1}-\mathbb{E}(Y_{2n-1})}
    {\sqrt{\mathbb{V}(Y_{2n-1})}}\right)^4
    &\le \frac{\sigma_n^4}{(\sigma_n^2-\frac14)^2}\,
    \mathbb{E}\left(\frac{X_{2n}-n}
    {\sqrt{\mathbb{V}(X_{2n})}}\right)^4 -
    \frac{\frac32\sigma_n^2-\frac5{16}}
    {(\sigma_n^2-\frac14)^2},
\end{align*}
which, by \eqref{4th-mm-ub}, is bounded above by
\[
    \frac{\sigma_n^4}{(\sigma_n^2-\frac14)^2}
     \left(3-\frac1{\sigma_n^2}\right)
    -\frac{\frac32\sigma_n^2-\frac5{16}}
    {(\sigma_n^2-\frac14)^2}=3-\frac1{\sigma_n^2}
    -\frac{6\sigma_n^2-1}
    {\sigma_n^2(4\sigma_n^2-1)^2}\le 3-\sigma_n^{-2}<3.
\]
Thus the fourth normalized central moment is bounded above by
$3$; the lower bound follows from the same Cauchy-Schwarz inequality
used in the even-degree cases.

On the other hand, since (again by \eqref{X-Y})
\[
    \frac{ X_{2n}-\mathbb{E}(X_{2n})}{\sqrt{\mathbb{V}(X_{2n})}}
    \stackrel{d}{=}\frac{\sqrt{\mathbb{V}(Y_{2n-1})}}
    {\sqrt{\mathbb{V}(X_{2n})}}
    \cdot \frac{Y_{2n-1}-\mathbb{E}(Y_{2n-1})}
    {\sqrt{\mathbb{V}(Y_{2n-1})}}+\frac{B-\frac12}
    {\sqrt{\mathbb{V}(X_{2n})}},
\]
we have, by (\ref{V-X-Y}),
\begin{equation}\label{X-Y-1}
    \frac{ X_{2n}-\mathbb{E}(X_{2n})}{\sqrt{\mathbb{V}(X_{2n})}}
    \stackrel{d}{=}
    \frac{Y_{2n-1}-\mathbb{E}(Y_{2n-1})}
    {\sqrt{\mathbb{V}(Y_{2n-1})}}
    \left(1+O\left(\frac{1}{\sqrt{n}}\right)\right)
    +O\left(\sigma_n^{-1}\right).
\end{equation}
The last identity implies that both sides converge to the same
limit law.

Assume that the fourth central moment of $Y_{2n-1}$ satisfies
\begin{align}\label{Y-2n-1}
    \mathbb{E}\left(\frac{Y_{2n-1}-\mathbb{E}(Y_{2n-1})}
    {\sqrt{\mathbb{V}(Y_{2n-1})}}\right)^4\to 3.
\end{align}
Then, by (\ref{Y-4th-mm}), we obtain
\[
    \mathbb{E}\left(\frac{X_{2n}-\mathbb{E}(X_{2n})}
    {\sqrt{\mathbb{V}(X_{2n})}}\right)^4
    =\left(\frac{\mathbb{V}(Y_{2n-1})}{\mathbb{V}(X_{2n})}\right)^2
    \mathbb{E}\left(\frac{Y_{2n-1}-\mathbb{E}(Y_{2n-1})}
    {\sqrt{\mathbb{V}(Y_{2n-1})}}\right)^4
    +O\left(\sigma_n^{-1}\right).
\]
Thus the left-hand side also tends to $3$ and, consequently,
$X_{2n}$ is asymptotically normally distributed. The asymptotic
distribution of $X_{2n}$ then implies, by
\eqref{X-Y-1}, that of $Y_{2n-1}$.

The proof for the Bernoulli case is similar and is omitted.

\section{The infinite-product representation for general limit laws}
\label{sec-gll}

We first prove Theorem \ref{hadamard} in this section, and
then mention some of its consequences.

\subsection{Proof of Theorem \ref{hadamard}}

It suffices to consider only the sequence of polynomials of even
degree. The symmetry of distribution of the limit law $X$ follows
from the symmetry of coefficients of polynomials $P_{2n}(z)$. The
inequality \eqref{mgf-ineq} for the moment generating function of
$(X_{2n}-n)/\sigma_n$ implies that the moment generating function of
the limit distribution $X$ is also finite, and thus $X$ is uniquely
determined by its moments. This means that the sequence
$\{(X_{2n}-n)/\sigma_n\}$ converges in distribution to $X$ as $n\to
\infty$ if and only if
\begin{equation*}
    \mathbb{E}\left(\frac{X_{2n}-n}{\sigma_n}\right)^m
    \to \mathbb{E}(X^m)\qquad(m\ge0),
\end{equation*}
as $n\to \infty$. Thus the cumulant $\bar{\kappa}_m(n)$ of
$(X_{2n}-n)/\sigma_n$ of order $m$ also converges to the cumulant of
$X$ of order $m$ for $m\ge1$. Note that $\bar{\kappa}_{2m+1}(n)=0$
for $m\ge0$ and (see \eqref{kappa-2m})
\[
    \bar{\kappa}_{2m}(n) = \sigma_n^{-2m}\kappa_{2m}(n)
    = \frac{(2m)!}{\sigma_n^{2m}}
    \sum_{1\le k\le m}\frac{(-1)^{k-1}}{k2^k}\,h_{m,k}S_{n,k}.
\]
Since $S_{n,k}\le \sigma_n^{2k}$, we the deduce that
\[
    \frac{\bar{\kappa}_{2m}(n)}{(2m)!}=
    \frac{(-1)^{m-1}}
    {m2^m}\cdot \frac{S_{n,m}}{\sigma_n^{2m}}
    +O(\sigma_n^{-2}),
\]
for any fixed $m$. Now $\sigma_n\to \infty$, we conclude that
\begin{equation}\label{lim-cumu}
    \frac{\kappa_{2m}}{(2m)!}
    =\lim_{n\to \infty}\frac{\bar{\kappa}_{2m}(n)}{(2m)!}
    =\frac{(-1)^{m-1}}{m2^m}\lim_{n\to \infty}
    \frac{S_{n,m}}{\sigma_n^{2m}}.
\end{equation}
We now introduce the distribution function
\[
    F_n(x):=
    \sum_{\frac{1}{\sigma_n^2(1-\cos\phi_j)}<x}
    \frac{1}{\sigma_n^2(1-\cos\phi_j)},
\]
with support in the unit interval. Then
\[
    \frac{S_{n,N}}{\sigma_n^{2N}}=\int_0^1x^{N-1}\dd F_n(x).
\]
The fact that the left-hand side of the above expression has a limit
(\ref{lim-cumu}) implies that the corresponding sequence
of distribution functions $F_n(x)$ also converges weakly to some
limit distribution function $F(x)$. Therefore
\[
    \lim_{n\to \infty}\frac{S_{n,N}}{\sigma_n^{2N}}
    =\int_0^1x^{N-1}\dd F(x),
\]
which implies that the cumulants of the limit distribution $X$ can
be expressed as
\[
    \frac{\bar{\kappa}_{2m}}{(2m)!}=\lim_{n\to \infty}
    \frac{\bar{\kappa}_{2m}(n)}{(2m)!}
    =\frac{(-1)^{m-1}}{m2^m}\lim_{n\to \infty}
    \frac{S_{n,m}}{\sigma_n^{2m}}
    =\frac{(-1)^{m-1}}{m2^m}\int_0^1x^{m-1}\dd F(x).
\]
It follows that
\begin{align}\label{MGF}
    \begin{split}
    \mathbb{E}(e^{Xs})&=\exp\left(\sum_{m\ge1}
    \frac{\kappa_{2m}}{(2m)!}s^{2m}\right)\\
    &=\exp\left(\sum_{m\ge1} \frac{(-1)^{m-1}s^{2m}}
    {m2^m}\int_0^1x^{m-1}\dd F(x)\right)\\
    &=\exp\left(\int_{0}^1\frac{\log
    \left(1+xs^2/2\right)}{x}\,\dd F(x)\right).
    \end{split}
\end{align}

Note that the distribution function $F_n(x)$ has no more than
$\lfloor1/\varepsilon\rfloor$ points of discontinuity in the
interval $[\varepsilon,1]$ if $\varepsilon>0$. Thus the weak limit
$F(x)$ of the sequence of $F_n(x)$ has the same property: $F(x)$ has
no more than $\lfloor1/\varepsilon\rfloor$ points of discontinuity
$q_k$ in the interval $[\varepsilon,1]$, where $q_k$ is the limit of
certain points of discontinuity of function $F_n(x)$. This means
that $F(x)$ is a distribution function of the form
\[
    F(x)=\begin{cases}
    q+\sum_{q_k<x}q_k, &\text{if $x\ge0$,}\\
    0, & \text{if $x<0$},
    \end{cases}
\]
where $q_k>0$ with $\sum_{k\ge1} q_k=1-q$. Here $q$ equals the jump
of the function $F(x)$ at zero. Thus
\[
    \int_{0}^1\frac{\log\left(1+xs^2/2\right)}{x}\dd F(x)
    =\frac{q}{2}\,s^2+\sum_{k\ge1}
    \log\left(1+\frac{q_k}{2}\,s^2\right).
\]
Substituting this expression into (\ref{MGF}), we obtain
\eqref{Exs-ip}. This completes the proof of Theorem
\ref{hadamard}.

\subsection{An alternative proof of Theorem \ref{hadamard}}

A less elementary proof of Theorem \ref{hadamard} relies on
the Hadamard factorization theorem (see \cite[Ch.\
8]{titchmarsh75a}; see also \cite{newman74a} for a similar context).
Indeed, assume that $(X_{2n}-n)/\sigma_n$ converges in distribution
to some limit law $X$, then the inequality \eqref{mgf-ineq} implies
that
\[
    \left|\mathbb{E}(e^{Xs})\right|
    \le  e^{3|s|^2/2} \qquad(s\in\mathbb{C}).
\]
In other words, it is an entire function of order $2$. Hadamard's
factorization theorem then implies that such a function can be
represented as an infinite product
\[
    \mathbb{E}(e^{Xs})
    =e^{As^2+Bs}\prod_{\rho}
    \biggl(1-\frac{s}{\rho}\biggr)e^{s/\rho},
\]
where $\rho$ ranges over all zeros of the function of the left-hand
side. On the other hand, the fact that all zeroes of the functions
$\mathbb{E}(e^{(X_{2n}-n)s/\sigma_n})$ are symmetrically located on the
imaginary line implies the same property for $\mathbb{E}(e^{Xs})$.
This yields
\[
    \mathbb{E}(e^{Xs})=e^{As^2+Bs}
    \prod_{k\ge1}\biggl(1+\frac{s^2}{t_k^2}\biggr),
\]
for some real sequence $t_k>0$. Now $\mathbb{E}(X)=0$ implies that $B=0$. 
Also $\mathbb{E}(X^2)=1$ leads to
\[
    A+\sum_{k\ge1}t_k^{-2}=1.
\]
Denoting by $q=2A$ and $q_k=2/t_k^2$, we obtain the representation 
\eqref{Exs-ip}.

\subsection{Implications of the infinite-product factorization}

By \eqref{Exs-ip},
\[
    \kappa_{2m} = \frac{(-1)^{m-1}}{m2^m}\,\sum_{j\ge1} q_j^m
    \qquad(m\ge2).
\]
This yields the sign-alternating property for the sequence
$\{\kappa_{2m}\}$.
\begin{cor} If $X$ is not the normal law, then all even cumulants are
non-zero and have alternating signs
\[
    (-1)^{m-1}\kappa_{2m}>0 \qquad(m\ge1).
\]
\end{cor}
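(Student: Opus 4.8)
The plan is to obtain the conclusion directly from the cumulant identity displayed immediately above the statement, namely $\kappa_{2m}=\frac{(-1)^{m-1}}{m2^m}\sum_{j\ge1}q_j^m$ for $m\ge2$, combined with the representation \eqref{Exs-ip} of Theorem~\ref{hadamard} and the normalization $q+\sum_{k\ge1}q_k=1$. Note first that the series $\sum_{j\ge1}q_j^m$ converges for every $m\ge1$, since $0\le q_j\le 1$ and $\sum_j q_j\le 1$; hence multiplying the identity by $(-1)^{m-1}$ gives $(-1)^{m-1}\kappa_{2m}=\frac{1}{m2^m}\sum_{j\ge1}q_j^m\ge0$ for $m\ge2$, with strict inequality precisely when at least one $q_j$ is positive.

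It therefore remains to treat the exceptional index $m=1$ and to connect the positivity of the $q_j$ with the hypothesis. For $m=1$ the claim is $(-1)^0\kappa_2=\kappa_2>0$, which is immediate because $\kappa_2=\mathbb{V}(X)=1$ (the limit $X$ of $\{X_n^*\}$ inherits zero mean and unit variance from convergence with all moments); equivalently, comparing the coefficient of $s^2$ on both sides of \eqref{Exs-ip} gives $\kappa_2/2!=\tfrac12 q+\tfrac12\sum_k q_k=\tfrac12$. For the connection, observe the contrapositive: if $q_j=0$ for all $j$, the normalization forces $q=1$ and \eqref{Exs-ip} collapses to $\mathbb{E}(e^{Xs})=e^{s^2/2}$, so $X$ is the standard normal law. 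Consequently, if $X$ is not normal then some $q_j>0$, whence $\sum_{j\ge1}q_j^m\ge q_j^m>0$ for every $m\ge1$, and so $(-1)^{m-1}\kappa_{2m}>0$ for all $m\ge1$; in particular no even cumulant vanishes.

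I do not expect a genuine obstacle here: the substantive work, namely the product representation \eqref{Exs-ip} and the cumulant formula, is already in place, so the corollary is essentially a reading-off argument. The one step deserving a careful sentence is the equivalence between ``$X$ is not the normal law'' and ``not all $q_j$ vanish''; this is immediate from \eqref{Exs-ip} once one uses that $X$ is determined by its moments (established in the proof of Theorem~\ref{hadamard}), and one could even take ``$q<1$'' as the literal hypothesis.
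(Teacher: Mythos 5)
Your proof is correct and follows exactly the route the paper intends: the corollary is read off from the displayed identity $\kappa_{2m}=\frac{(-1)^{m-1}}{m2^m}\sum_{j\ge1}q_j^m$ together with the observation that ``$X$ not normal'' forces some $q_j>0$ (the paper states the corollary without further proof). Your explicit handling of the $m=1$ case via $\kappa_2=1$ and of the equivalence between non-normality and $q<1$ is a sensible and accurate filling-in of details the paper leaves implicit.
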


\begin{cor}
\[
    1\le \mathbb{E}(X^4) \le 3.
\]
\end{cor}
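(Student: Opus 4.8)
The plan is to read $\mathbb{E}(X^4)$ off from the cumulants of $X$, which the representation \eqref{Exs-ip} displays explicitly. First I would expand $\log\mathbb{E}(e^{Xs})=\frac{q}{2}s^2+\sum_{k\ge1}\log\bigl(1+\frac{q_k}{2}s^2\bigr)$ in powers of $s$: the coefficient of $s$ is $0$, so $\kappa_1=0$ (consistent with the symmetry of $X$), and the coefficient of $s^2$ equals $\frac{1}{2}\bigl(q+\sum_{k\ge1}q_k\bigr)=\frac{1}{2}$, so $\kappa_2=1$ (consistent with $\mathbb{V}(X)=1$). Since $\kappa_1=0$, the moment--cumulant relation gives $\mathbb{E}(X^4)=\kappa_4+3\kappa_2^2=\kappa_4+3$; all moments here are finite because \eqref{mgf-ineq} makes the moment generating function of $X$ entire.

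Next I would bound $\kappa_4$. In the expansion above the $s^4$--term arises solely from the $-\frac{1}{2}u^2$ term of $\log(1+u)$ with $u=\frac{q_k}{2}s^2$, which yields $\kappa_4=-3\sum_{k\ge1}q_k^2\le 0$; equivalently, this is the case $m=2$ of the sign-alternation corollary just proved (and $\kappa_4=0$ exactly when $X$ is Gaussian). Hence $\mathbb{E}(X^4)=3+\kappa_4\le 3$. For the lower bound, the Cauchy--Schwarz inequality applied to $X^2$ and the constant $1$ gives $1=(\mathbb{E}(X^2))^2\le\mathbb{E}(X^4)$. Combining the two estimates yields $1\le\mathbb{E}(X^4)\le 3$.

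There is essentially no obstacle here; the only points requiring care are already secured in the proof of Theorem~\ref{hadamard}, namely the convergence of the infinite product in \eqref{Exs-ip} (which legitimises the term-by-term logarithmic expansion) and the finiteness of all moments of $X$. An alternative, even shorter, route is to pass to the limit directly in the bounds $1\le\mathbb{E}\bigl((X_{2n}-n)/\sigma_n\bigr)^4<3$ of \eqref{4th-mm-ub} (or in \eqref{4th-mm-bds} for general degree): since $X_n^*$ converges to $X$ together with all its moments, the half-open range $[1,3)$ closes up to $[1,3]$ in the limit.
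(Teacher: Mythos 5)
Your proof is correct and follows essentially the same route as the paper: expanding the representation \eqref{Exs-ip} gives $\mathbb{E}(X^4)=3\bigl(1-\sum_{k\ge1}q_k^2\bigr)\le 3$ (the paper states this identity directly as \eqref{EX4}, which is exactly your $3+\kappa_4$ with $\kappa_4=-3\sum_k q_k^2$), and the lower bound is the same Cauchy--Schwarz argument $1=\mathbb{E}(X^2)\le\sqrt{\mathbb{E}(X^4)}$. Your alternative remark about passing to the limit in \eqref{4th-mm-bds} is also sound but not needed.
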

\begin{proof}
By \eqref{Exs-ip},
\begin{align} \label{EX4}
    \mathbb{E}(X^4)
    =3\biggl(1-\sum_{j\ge1} q_j^2\biggr),
\end{align}
which implies the upper bound; the lower bound follows directly from
Cauchy-Schwarz inequality $1=\mathbb{E}(X^2)\le \sqrt{\mathbb{E}(X^4)}$.
\end{proof}
\begin{cor}
The standard normal distribution is the only distribution for which
the fourth moment reaches the maximum value $3$ in the class of
distributions that are the limits of random variables whose
probability generating functions are root-unitary polynomials;
similarly, the Bernoulli distribution assuming $\pm1$ with
probability $1/2$ each is the only distribution whose fourth moment
reaches the minimum value $1$ in the same class of distributions.
\end{cor}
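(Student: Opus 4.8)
The plan is to read both statements directly off the infinite-product representation \eqref{Exs-ip} and the identity \eqref{EX4}, that is, $\mathbb{E}(X^4)=3\bigl(1-\sum_{j\ge1}q_j^2\bigr)$, using in addition that the limit law $X$ has a finite moment generating function (guaranteed by \eqref{mgf-ineq}) and satisfies $\mathbb{E}(X)=0$ and $\mathbb{E}(X^2)=1$ by construction. Since the previous corollary already gives $1\le\mathbb{E}(X^4)\le3$ throughout the class, it remains only to characterise the two boundary cases, and I would handle the maximum and the minimum separately.

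For the maximum: assume $X$ is in the class and $\mathbb{E}(X^4)=3$. By \eqref{EX4} this is equivalent to $\sum_{j\ge1}q_j^2=0$, hence to $q_j=0$ for all $j$; the normalization $q+\sum_{j\ge1}q_j=1$ then forces $q=1$, so \eqref{Exs-ip} reduces to $\mathbb{E}(e^{Xs})=e^{s^2/2}$, the moment generating function of $\mathscr{N}(0,1)$. Since a finite moment generating function determines the distribution, $X$ is standard normal; conversely $\mathscr{N}(0,1)$ has fourth moment $3$ and is realised as a limit law (for instance by the inversions polynomials in the Introduction), so it is the unique maximiser. I would emphasise that the representation \eqref{Exs-ip} really is needed here: the plain condition $\mathbb{E}(X^4)=3$ does not, by itself, force normality for an arbitrary random variable.

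For the minimum: assume $X$ is in the class and $\mathbb{E}(X^4)=1$. Then equality holds in $1=\bigl(\mathbb{E}(X^2)\bigr)^2=\bigl(\mathbb{E}(X^2\cdot 1)\bigr)^2\le\mathbb{E}(X^4)\,\mathbb{E}(1^2)=1$, so by the equality case of the Cauchy--Schwarz inequality $X^2$ is almost surely constant; since $\mathbb{E}(X^2)=1$ that constant equals $1$, whence $X$ takes only the values $\pm1$, and $\mathbb{E}(X)=0$ gives $\mathbb{P}(X=1)=\mathbb{P}(X=-1)=\tfrac12$. Conversely this Bernoulli law has fourth moment $1$ and is realised as a limit law (for instance by $P_{2n}(z)=(1+z^{2n})/2$), so it is the unique minimiser.

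The argument is short, and I do not expect a genuine obstacle; the one point worth getting right is the choice of route for the minimum. Arguing through the equality case of Cauchy--Schwarz pins down the distribution of $X$ in one line, whereas trying instead to extract it from $\sum_{j\ge1}q_j^2=2/3$ under $q_j\ge0$ and $\sum_{j\ge1}q_j\le1$ would make it far less transparent that the minimising sequence --- and hence $X$ --- is uniquely determined.
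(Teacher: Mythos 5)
Your proposal is correct and follows essentially the same route as the paper: the maximum case is read off the identity \eqref{EX4} (forcing $q_j\equiv 0$, $q=1$), and the minimum case rests on the observation that $\mathbb{E}(X^2)=\mathbb{E}(X^4)=1$ forces $X^2=1$ almost surely --- your Cauchy--Schwarz equality argument is the same computation the paper phrases as $\mathbb{V}(X^2)=\mathbb{E}(X^4)-2\mathbb{E}(X^2)+1=0$. The only cosmetic difference is that the paper finishes the minimum case by invoking the symmetry of the limit law while you use $\mathbb{E}(X)=0$; both are valid.
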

\begin{proof}
Note that the standard normal law corresponds to the choices $q=1$
and $q_j\equiv 0$, the first part of the corollary follows then from
\eqref{EX4}.

For the lower bound, assume that $Y$ is a symmetric distribution
such that $\mathbb{E}(Y)=0$ and $\mathbb{E}(Y^2)=\mathbb{E}(Y^4)=1$. 
Then
\[
    \mathbb{V}(Y^2)=\mathbb{E}(Y^2-1)^2
    =\mathbb{E}(Y^4-2Y^2+1)=0.
\]
This means that $Y$ can only assume two values
$\mathbb{P}(Y\in\{-1,1\})=1$. The symmetry of $Y$ now implies that
$Y$ assumes the values $1$ and $-1$ with equal probabilities.
\end{proof}

\begin{rem}
The uniqueness of the standard normal and Bernoulli laws also implies 
that a sequence of random variables $\{X_n\}$ converges to normal or 
Bernoulli if and only if its fourth normalized central moment 
converges to $3$ or to $1$, respectively. This provides an alternative 
proof of the last two statements of Theorem \ref{limit-law}.
\end{rem}

\section{Applications. I. Normal limit law}
\label{sec-app-i}

We consider in this section applications of our results in the
situations when the limit law is normal.

\subsection{A simple framework}
Our starting point is the polynomials of the form
\begin{align} \label{PNz}
    P_n(z)=\frac{(1-z^{b_1})(1-z^{b_2})
    \cdots (1-z^{b_N})}{(1-z^{a_1})(1-z^{a_2})
    \cdots (1-z^{a_N})},
\end{align}
where $a_j$, $b_j$ are non-negative integers that may depend
themselves on $N$ and 
\[
    n := \sum_{1\le j\le N}(b_j-a_j).
\]
We assume that $P_n(z)$ has only nonnegative
coefficients. Such a simple form arises in a large number of diverse
contexts, some of which will be examined below. In particular, it
was studied in the recent paper \cite{chen08a}.

We now consider a sequence of random variables $X_n$ defined by
\[
    \mathbb{E}(z^{X_n})=\frac{P_n(z)}{P_n(1)}.
\]
We have
\[
    \frac{P_n(e^s)}{P_n(1)}
    =\exp\Bigl(\sum_{m\ge1}\frac{\kappa_{N,m}}{m!}s^m\Bigr),
\]
where
\[
    \kappa_{N,m} = \frac{(-1)^m}{m}\,B_m
    \sum_{1\le j\le N}(b_j^m-a_j^m)\qquad(m\ge1),
\]
the $B_m$'s being the Bernoulli numbers. Note that $B_{2m+1}=0$ for
$m\ge1$.

An application of Theorem~\ref{limit-law} yields the following 
result. 
\begin{thm} \label{thm-clt} The sequence of the random variables
$(X_n-\mathbb{E}(X_n))/\sqrt{\mathbb{V}(X_n)}$ converges to the standard
normal distribution if and only if the following cumulant condition
holds
\begin{align}\label{cc}
    \lim_{N\to \infty}\frac{\kappa_{N,4}}{\kappa_{N,2}^2}
    =\frac{144}{120}\lim_{N\to \infty}
    \frac{\sum_{1\le j\le N}(b_j^4-a_j^4)}
    {\left(\sum_{1\le j\le N}(b_j^2-a_j^2)\right)^2}=0.
\end{align}
\end{thm}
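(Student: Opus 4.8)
The plan is to apply Theorem~\ref{limit-law} directly: the random variable $X_n$ attached to the polynomial \eqref{PNz} has a root-unitary probability generating function (each factor $(1-z^{b})/(1-z^{a})$ with $a\mid b$ contributes only roots on the unit circle, and more generally the stated nonnegativity hypothesis forces the product to be a genuine polynomial whose zeros are roots of unity), so the criterion \eqref{4th-mm-iff}, in its cumulant form $\kappa_4(n)/\kappa_2(n)^2\to 0$, is available. Thus the entire task reduces to rewriting that cumulant ratio in terms of the explicit formula $\kappa_{N,m}=\frac{(-1)^m}{m}B_m\sum_{j}(b_j^m-a_j^m)$.

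First I would substitute $m=2$ and $m=4$ into that formula. Using $B_2=\tfrac16$ and $B_4=-\tfrac1{30}$, one gets $\kappa_{N,2}=\tfrac1{12}\sum_j(b_j^2-a_j^2)$ and $\kappa_{N,4}=-\tfrac1{120}\sum_j(b_j^4-a_j^4)$. Hence
\[
    \frac{\kappa_{N,4}}{\kappa_{N,2}^2}
    =\frac{-\tfrac1{120}\sum_j(b_j^4-a_j^4)}
    {\bigl(\tfrac1{12}\sum_j(b_j^2-a_j^2)\bigr)^2}
    =-\frac{144}{120}\cdot
    \frac{\sum_j(b_j^4-a_j^4)}{\bigl(\sum_j(b_j^2-a_j^2)\bigr)^2}.
\]
This identity matches the constant in \eqref{cc} up to the sign, which is immaterial for a limit equal to $0$; I would simply remark that both $\kappa_{N,2}$ and $\kappa_{N,4}$ have fixed signs (indeed $\kappa_{N,2}=\mathbb V(X_n)>0$ forces $\sum_j(b_j^2-a_j^2)>0$, and similarly for the quartic sum via nonnegativity of $X_n$'s moments), so the ratio in \eqref{cc} is genuinely a ratio of positive quantities and its vanishing is equivalent to $\kappa_{N,4}/\kappa_{N,2}^2\to 0$.

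Having established this equivalence, the result follows: by Theorem~\ref{limit-law} (equivalently the remark after it, via the cumulant reformulation recorded just after \eqref{4th-mm-iff} that $\mathbb E(X_n^*)^4\to 3$ iff $\kappa_4(n)/\kappa_2^2(n)\to 0$), asymptotic normality of $(X_n-\mathbb E(X_n))/\sqrt{\mathbb V(X_n)}$ holds iff $\kappa_{N,4}/\kappa_{N,2}^2\to 0$, which by the computation above is exactly \eqref{cc}.

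The only genuine point requiring care — and the one I would flag as the main obstacle — is the passage between the ``$n$'' in Theorem~\ref{limit-law} (the degree, which governs the index of the sequence and the denominators $\sigma_n$ there) and the ``$N$'' here (the number of factors): one must check that as $N\to\infty$ the degree $n=\sum_j(b_j-a_j)$ also tends to infinity so that the hypotheses of Theorem~\ref{limit-law} apply along the relevant subsequence, and that the cumulants $\kappa_{N,m}$ defined via the $B_m$ expansion really do coincide with the cumulants $\kappa_m(n)$ of the earlier sections. Both are routine: the generating-function identity $P_n(e^s)/P_n(1)=\exp(\sum_m \kappa_{N,m}s^m/m!)$ is exactly the definition of cumulants, and $n\to\infty$ because otherwise $\sigma_n^2=\kappa_{N,2}$ stays bounded by \eqref{ineq-var}, contradicting that the normalized fourth moment could approach $3$ (or, if $n$ stays bounded, there is nothing to prove since then neither side of the equivalence can hold unless trivially). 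I would state this reconciliation in one sentence and then conclude.
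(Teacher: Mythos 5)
Your proof is correct and follows essentially the same route as the paper, which simply invokes Theorem~\ref{limit-law} via the cumulant reformulation $\mathbb{E}(X_n^*)^4\to 3 \iff \kappa_4/\kappa_2^2\to 0$ together with the explicit Bernoulli-number formula for $\kappa_{N,m}$; your filling in of the constant $144/120$, the root-unitarity of $P_n(z)$, and the $N$-versus-$n$ indexing are exactly the details the paper leaves implicit. Your observation that the displayed constant should carry a minus sign (since $\kappa_{N,4}<0$ while the quartic sum is positive) is a correct catch of a typo in \eqref{cc} and, as you note, is immaterial for a limit equal to zero.
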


The cumulant condition largely simplifies the sufficient condition
given by \cite{chen08a}, where they require the convergence of all
cumulants
\[
    \frac{\kappa_{N,2m}}{\kappa_{N,2}^m}\to 0 \qquad(m\ge2),
\]
following the proof used by \cite{sachkov97a}. See also 
\cite{janson88a} for a related framework. 

\subsection{Applications of Theorem~\ref{thm-clt} }

Theorem~\ref{thm-clt} can be applied to a large number of examples. 
Many other examples related to Poincar\'e polynomials, rank statistics, 
and integer partitions can be found in the literature; see, for 
example, \cite{akyildiz04a,andrews76a,van-de-wiel99a} and the 
references therein. 

\paragraph{Inversions in permutations}
The generating polynomial for the number of inversions in a
permutation of $n$ elements (or Kendall's $\tau$ statistic) 
is given by
\[
    \prod_{1\le j\le n}\frac{1-z^j}{1-z}.
\]
In this case, the cumulant condition \eqref{cc} has the form
\[
    \frac{\sum_{1\le j\le n}(j^4-1)}
    {\left(\sum_{1\le j\le n}(j^2-1)\right)^2}=O(n^{-1}).
\]
Thus the number of inversions in random permutations is
asymptotically normally distributed; see \cite{feller45a},
\cite{sachkov97a}; see also 
\cite{cronholm65a,louchard03a,margolius01a}.

\paragraph{Number of inversions in Stirling permutations} 
In this case, we have the polynomial (see \cite{park94a})
\[
    \prod_{1\le j\le n}
    \frac{1-z^{r+(j-1)r^2}}{1-z^r}\qquad(r\ge1),
\]
and the cumulant condition \eqref{cc} is of order
\[
    \frac{\kappa_{n,4}}{\kappa_{n,2}^2}
    =\frac{\sum_{0\le j<n}\bigl((r+jr^2)^4-1\bigr)}
    {\left(\sum_{0\le j<n}\bigl((r+jr^2)^2-1\bigr)\right)^2}
    =O(n^{-1}).
\]
Consequently, the number of inversions in random Stirling
permutations is asymptotically normally distributed.

\paragraph{Gaussian polynomials} The generating function for
the number $p(n,m,j)$ of partitions of integer $j$ into at most
$m$ parts, each $\le n$, is given by (see e.g.
\cite{andrews76a})
\[
    \sum_{0\le j\le nm}p(n,m,j)z^j
    = \prod_{1\le j\le n} \frac{1-z^{j+m}}{1-z^j}.
\]
Then the cumulant condition has the form
\[
    \frac{\sum_{1\le j\le n}((m+j)^4-j^4)}
    {\left(\sum_{1\le j\le n}((m+j)^2-j^2)\right)^2}
    =O\left(\frac{1}{m}+\frac{1}{n}\right).
\]
This means that the coefficients of Gaussian polynomials are
normally distributed if both $n,m\to \infty$; see
\cite{mann47a,takacs86a}. More examples can be found in 
\cite{andrews76a}. 

\paragraph{Mahonian statistics} 
In this case the polynomials are equal to the general $q$-multinomial 
coefficients (see \cite{canfield11a} and \cite{canfield12a})
\[
    P_{n}(z) 
    =\frac{\prod_{1\le j\le a_1+\cdots+a_m}(1-z^j)}
    {\prod_{1\le j\le m}\prod_{1\le i\le a_j}(1-z^i)},
\]
where $n = \sum_{2\le k\le m}a_k\sum_{1\le j<k} a_j$. 
By symmetry, we can assume that $a_1\ge \cdots \ge a_m$. 
Then the cumulant condition \eqref{cc} becomes
\[\begin{split}
    \frac{\sum_{1\le j\le a_1+\cdots+a_m}i^4
    -\sum_{1\le j\le m}\sum_{1\le i\le a_j}i^4}
    {\left(\sum_{1\le j\le a_1+\cdots+a_m}i^2
    -\sum_{1\le j\le m}\sum_{1\le i\le a_j}i^2\right)^2} 
    =\frac{f_4(a_1+\cdots+a_m)
    -\sum_{1\le j\le m}f_4(a_j)}{\bigl(f_2(a_1+\cdots+a_m)
    -\sum_{1\le j\le m}f_2(a_j)\bigr)^2},
\end{split}\]
where $f_2(x)=(2x^3+3x^2+x)/6$ and $f_4(x)=(6x^5+15x^4+10x^3-x)/30$.
By induction, $(a_1+\cdots+a_m)^k -a_1^k-\cdots -a_m^k$ is nonnegative
and is nondecreasing in $k\ge1$. Thus the right-hand side 
is bounded above by 
\[\begin{split}
    &\frac{9\cdot 31}{30}\frac{(a_1+\cdots+a_m)^5
    -a_1^5-\cdots-a_m^5}{\bigl((a_1+\cdots+a_m)^3
    -a_1^3-\cdots-a_m^3\bigr)^2} \\
    &\qquad =O\left(\frac{a_1+\cdots+a_m}
    {\sum_{1\le i<j\le m}a_ia_j}\right)
    =O\left(\frac{a_1+\cdots+a_m}
    {a_1(a_2+a_3+\cdots+a_m)}\right)\\
    &\qquad=O\left(\frac{1}{a_2+a_3+\cdots+a_m}+
    \frac{1}{a_1}\right),
\end{split}
\]
where we use the estimates
\[\begin{split}
    (a_1+\cdots+a_m)^3-a_1^3-\cdots-a_m^3
    &\asymp (a_1+\cdots+a_m)
    \sum_{1\le i<j\le m}a_ia_j,\\
    (a_1+\cdots+a_m)^5
    -a_1^5-\cdots-a_m^5
    &\asymp (a_1+\cdots+a_m)^3
    \sum_{1\le i<j\le m}a_ia_j.
\end{split}
\]
Thus we arrive at the same conditions as those in \cite{canfield11a}
\[
    a_1\to \infty \quad \hbox{and}
    \quad a_2+a_3+\cdots+a_m\to \infty,
\]
for the asymptotic normality of the coefficients of $P_n(z)$ when
$a_1\ge a_2\ge \cdots \ge a_m$.

\paragraph{Generalized $q$-Catalan numbers} The generating function
has the form
\[
    \prod_{2\le j\le n}\frac{1-z^{(m-1)n+j}}{1-z^j},
\]
and the cumulant condition \eqref{cc} also holds
\[\begin{split}
    \frac{\sum_{2\le j\le n}\bigl(((m-1)n+j)^4-j^4\bigr)}
    {\left(\sum_{2\le j\le n}
    \bigl(((m-1)n+j)^2-j^2\bigr)\right)^2}
    \le \frac{\sum_{2\le j\le n}(2mn)^4}{\left(\sum_{2\le j\le n}
    (m-1)^2n^2\right)^2}=O\left(n^{-1}\right),
\end{split}
\]
which means that the generalized $q$-Catalan numbers are 
asymptotically normally distributed, uniformly for all $m\ge 2$. 
This result was previously proved by \cite{chen08a}.

\paragraph{Sums of uniform discrete distributions} Let $X_n$ be the
sum of $N$ independent, integer-valued random variables
\[
    X_n :=J_1+J_2+\cdots+J_N,
\]
where $J_k$ is a uniform distribution on the set $\{0,1,2,\ldots,
d_k-1\}$ with $d_k\ge2$, and $n=\sum_{1\le j\le N}(d_j-1)$. 
Then the corresponding probability
generating function $\mathbb{E}(z^{X_n})$ is equal, up to a
normalizing constant, to
\[
    P_n(z)=\prod_{1\le j\le N} \frac{1-z^{d_j}}{1-z},
\]
which means that $X_n$ is asymptotically normal if and only
if
\[
    \frac{\sum_{1\le j\le N}(d_j^4-1)}
    {\left(\sum_{1\le j\le N}(d_j^2-1)\right)^2}\to 0.
\]
Since by our assumption $d_j\ge 2$, we have $d_j-1\asymp d_j$ and
thus we can simplify our necessary and sufficient condition for
asymptotic normality as
\begin{equation}\label{uniform_norm}
    \frac{d_1^4+d_2^4+\cdots+d_N^4}
    {(d_1^2+d_2^2+\cdots+d_N^2)^2}\to 0 \qquad(N\to\infty).
\end{equation}
Note that $d_j$ here can depend on $N$. The continuous version of
this problem with $J_k$ being uniformly distributed on the intervals
$[0,d_j]$ was considered in \cite{olds52a}. The corresponding
necessary and sufficient condition obtained in this paper was
\[
    \frac{\max_{1\le j\le N}d_j}
    {\sqrt{d_1^2+d_2^2+\cdots+d_N^2}}\to 0
\]
which is equivalent to condition (\ref{uniform_norm}).

\paragraph{Number of inversions in bimodal permutations}
A permutation $\sigma=(s_1,s_2,\ldots,s_n)$ of $n$ numbers
$1,2,3,\ldots,n$ is said to be of a shape $(i,k-j,j,l)$ if the first
$i$ numbers in the permutation are decreasing $s_1>s_2>\cdots>s_i$,
the next $k-j$ numbers are increasing $s_{i+1}> s_2> \cdots>
s_{i+k-j}$, then followed by $j$ increasing and $l$ decreasing
numbers. Assume that $\sigma$ is chosen with equal probability among
all permutations of shape $(i,k-j,j,l)$. Then its number of
inversions becomes a random variable $I_n = I_n(i,k-j,j,l)$. The 
probability generating function of $I_n$ is, up to some 
constant, of the form (see \cite{bohm05a})
\[
    P_n(i,k,l,j;z)=z^{\binom{i}{2}+\binom{j}{2}}
    \Biggl(\prod_{1\le \nu\le i}\frac{1-z^{k+\nu}}{1-z^\nu}\Biggr)
    \Biggl(\prod_{1\le \nu\le l}\frac{1-z^{k+i+\nu}}{1-z^{\nu}}\Biggr)
    \Biggl(\prod_{1\le \nu\le j}\frac{1-z^{k-j+\nu}}{1-z^{\nu}}\Biggr).
\]
The random variables $I_n$ are asymptotically normally 
distributed if
\[
    \frac{\sum_{\nu=1}^i((k+\nu)^4-\nu^4)
    +\sum_{\nu=1}^l((k+i+\nu)^4-\nu^4)
    +\sum_{\nu=1}^j((k-j+\nu)^4-\nu^4)}
    {\left(\sum_{\nu=1}^i((k+\nu)^2-\nu^2)
    +\sum_{\nu=1}^l((k+i+\nu)^2-\nu^2)
    +\sum_{\nu=1}^j((k-j+\nu)^2-\nu^2)\right)^2}\to 0,
\]
which is equivalent to
\[
    \frac{ik(k+i)^3+l(k+i)(k+i+l)^3+j(k^4-j^4)}
    {\bigl(ik(k+i)+l(k+i)(k+i+l)+j(k^2-j^2)\bigr)^2}\to 0.
\]
If we assume that the parameters
$i,j,k,l$ are proportionate to some parameter $t$, that is
$i=\tr{\alpha t}, j=\tr{\beta t}, k=\tr{\gamma t}, l=\tr{\delta t}$,
where $\alpha,\beta,\gamma,\delta>0$ and $\alpha+\gamma+\delta=1$,
then the above condition is satisfied and as a consequence
$I_n$ is asymptotically normally distributed as $t\to
\infty$. This fact has been proved in \cite{bohm05a} by the method
of moments.

\paragraph{Rank statistics} Many test statistics based on ranks
lead to explicit generating functions that are of the form 
\eqref{PNz}, and thus the corresponding limit distribution can be 
dealt with by the tools we established. In particular, we have
the following correspondence between test statistics and 
combinatorial structures; see \cite{van-de-wiel99a} for more 
information. 
\begin{center}
\begin{tabular}{|c|c|}\hline
Kendall's $\tau$ & Inversions in permutations\\ \hline
Mann-Whitney test & Gaussian polynomials \\ \hline
Jonckheere-Terpstra test & Mahonian statistics \\ \hline
\end{tabular}
\end{center}	

On the other hand, the Wilcoxon signed rank test 
(see \cite{wilcoxon47a}) leads to the 
probability generating function of the form
\[
    \prod_{1\le j\le n}\frac{1+z^j}2,
\]
which admits a straightforward generalization to 
(see \cite{van-de-wiel99a} for details)
\[
    \prod_{1\le j\le n}\frac{1+z^{a_j}}2,
\]
where the $a_j$'s can be any real numbers. When they are all 
nonnegative integers, we see, by \eqref{cc}, that the 
associated random variables are asymptotically normally 
distributed if and only if
\[
    \frac{a_1^4+\cdots+a_n^4}
    {(a_1^2+\cdots+a_n^2)^2}\to 0,
\]
as $n\to \infty$. In particular, this applies to Wilcoxon's test 
($a_j=j$) and to Policello and Hettmansperger's test ($a_j = 
\min\{2j,n+1\}$; \cite{policello76a}).

\subsection{Tur\'an-Fej\'er polynomials}

The class of polynomials we consider here (see \eqref{Pnkx} below)
is of interest for several reasons. First, they lead to
asymptotically normally distributed random variables but do not have
the finite-product form \eqref{PNz}. Second, they provide
natural examples with non-normal limit laws when the second
parameter varies. Finally, they have a concrete interpretation in
terms of the partitioning cost of some variants of quicksort.

\cite{fejer37a} studied the Ces\`aro summation of the geometric
series defined by
\[
    F_{n,k}(z) := \sum_{0\le j\le n} F_{j,k-1}(z) \qquad(k\ge1),
\]
with
\[
    F_{n,0}(z) := \sum_{0\le j\le n} z^j,
\]
and \cite{turan49a} proved that all $F_{n,k}^{(k)}(z)$ are
root-unitary for $0\le k\le n$. We characterize all possible limit
laws for the random variables defined via the coefficients of
$F_{n,k}^{(k)}(z)$ for $0\le k\le n$.

By the relation
\[
    F_{n,k}(z) = [w^n]\frac{1}{(1-w)^{k+1}(1-zw)},
\]
where $[w^n]f(w)$ denotes the coefficient of $w^n$ in the Taylor
expansion of $f(w)$, we have
\begin{align*}
    F_{n,k}^{(k)}(z) &= [w^n]\frac{1}{(1-w)^{k+1}}
    \cdot \frac{k! w^k}{(1-zw)^{k+1}}\\
    &= k!\sum_{0\le j\le n-k} \binom{j+k}{k}\binom{n-j}{k} z^j.
\end{align*}
Normalizing this polynomial, we obtain
\begin{align} \label{Pnkx}
    P_{n,k}(z) := \sum_{0\le j\le n-k}
    \frac{\binom{j+k}{k}\binom{n-j}{k}}{\binom{n+k+1}{2k+1}}\,
    z^j,
\end{align}
which gives rise to a sequence of probability generating functions 
of random variables, say $Z_{n,k}$. Note that
\[
    z^kP_{n-k-1,k}(z) = \sum_{k\le j\le n-k-1}
    \frac{\binom{j}{k}\binom{n-1-j}{k}}{\binom{n}{2k+1}}\,
    z^j,
\]
which arises in the analysis of quicksort using the median of $2k+1$
elements; see \cite{sedgewick80a,chern02a} or Appendix.

\begin{lem} For $m\ge0$
\begin{align} \label{Znk-mm}
    \mathbb{E}(Z_{n,k}^m) = \sum_{0\le \ell \le m}S(m,\ell)\ell!
    \frac{\binom{k+\ell}{k}\binom{n+k+1}{2k+\ell+1}}
    {\binom{n+k+1}{2k+1}},
\end{align}
where $S(m,\ell)$ denotes the Stirling numbers of the second kind.
In particular,
\begin{align}\label{Znk-mv}
    \mathbb{E}(Z_{n,k}) = \frac{n-k}{2}\quad\text{and}\quad
    \mathbb{V}(Z_{n,k}) = \frac{(n-k)(n+k+2)}{4(2k+3)}.
\end{align}
\end{lem}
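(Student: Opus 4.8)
The plan is to reduce everything to \emph{factorial} moments, which diagonalize the binomial structure of $P_{n,k}(z)$, and then to convert back to ordinary moments via Stirling numbers of the second kind. Write $x^{\underline{\ell}}:=x(x-1)\cdots(x-\ell+1)$ for the falling factorial, so that $x^m=\sum_{0\le\ell\le m}S(m,\ell)\,x^{\underline{\ell}}$. Since $\mathbb{E}(z^{Z_{n,k}})=P_{n,k}(z)$, taking $\ell$-th derivatives at $z=1$ gives
\[
    \mathbb{E}\bigl(Z_{n,k}^{\underline{\ell}}\bigr)
    =\frac{1}{\binom{n+k+1}{2k+1}}
    \sum_{0\le j\le n-k}j^{\underline{\ell}}
    \binom{j+k}{k}\binom{n-j}{k},
\]
and $\mathbb{E}(Z_{n,k}^m)=\sum_{0\le\ell\le m}S(m,\ell)\,\mathbb{E}(Z_{n,k}^{\underline{\ell}})$; so it suffices to evaluate the inner sum for each $\ell$.

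First I would establish the pointwise identity
\[
    j^{\underline{\ell}}\binom{j+k}{k}
    =\ell!\,\binom{k+\ell}{k}\binom{j+k}{k+\ell}\qquad(j,k,\ell\ge0),
\]
which follows by differentiating $\sum_{j\ge0}\binom{j+k}{k}w^{j}=(1-w)^{-(k+1)}$ exactly $\ell$ times and reading off the coefficient of $w^{\,j-\ell}$ (equivalently, a one-line rearrangement of factorials). Substituting this into the formula for $\mathbb{E}(Z_{n,k}^{\underline{\ell}})$ and then invoking the Vandermonde--Chu convolution in the form $\sum_{i}\binom{i}{p}\binom{M-i}{q}=\binom{M+1}{p+q+1}$, with $i=j+k$, $M=n+k$, $p=k+\ell$, $q=k$ (all binomials vanishing automatically outside $k\le j\le n-k$, so no boundary corrections arise), gives
\[
    \sum_{0\le j\le n-k}\binom{j+k}{k+\ell}\binom{n-j}{k}
    =\binom{n+k+1}{2k+\ell+1}.
\]
Hence $\mathbb{E}(Z_{n,k}^{\underline{\ell}})=\ell!\binom{k+\ell}{k}\binom{n+k+1}{2k+\ell+1}\big/\binom{n+k+1}{2k+1}$, and summing against $S(m,\ell)$ produces \eqref{Znk-mm} at once. (The case $\ell=0$ also recovers $P_{n,k}(1)=1$, confirming the normalization in \eqref{Pnkx}.)

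For \eqref{Znk-mv} I would just specialize. Using $S(1,1)=1$ and $\binom{N}{r+1}/\binom{N}{r}=(N-r)/(r+1)$ with $N=n+k+1$, $r=2k+1$, the $m=1$ case collapses to $\mathbb{E}(Z_{n,k})=(k+1)\cdot\frac{n-k}{2(k+1)}=\frac{n-k}{2}$. For the variance, $S(2,1)=S(2,2)=1$ together with $\binom{N}{r+2}/\binom{N}{r}=(N-r)(N-r-1)/\bigl((r+1)(r+2)\bigr)$ yields $\mathbb{E}(Z_{n,k}^2)=\frac{n-k}{2}+\frac{(k+2)(n-k)(n-k-1)}{2(2k+3)}$, and then $\mathbb{V}(Z_{n,k})=\mathbb{E}(Z_{n,k}^2)-\bigl(\tfrac{n-k}{2}\bigr)^2$, put over the common denominator $4(2k+3)$, has numerator factoring as $(n-k)(n+k+2)$, giving the claimed $\frac{(n-k)(n+k+2)}{4(2k+3)}$. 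The only real obstacle here is bookkeeping: one must check that the summand genuinely vanishes outside the nominal range so that the Vandermonde identity applies verbatim, and the final variance simplification, though elementary, needs a clean choice of common denominator to expose the factorization; neither step is conceptually difficult.
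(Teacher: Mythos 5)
Your proof is correct and follows essentially the same route as the paper: converting $j^m$ to falling factorials via Stirling numbers of the second kind and then evaluating $\sum_{j}\binom{j+k}{k}\binom{n-j}{k}\binom{j}{\ell}$ by a Vandermonde-type convolution (your two-step split into the trinomial rewriting $j^{\underline{\ell}}\binom{j+k}{k}=\ell!\binom{k+\ell}{k}\binom{j+k}{k+\ell}$ followed by the Chu--Vandermonde sum is exactly the paper's identity ``proved by convolution,'' just spelled out). The mean and variance specializations also check out, so nothing further is needed.
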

\begin{proof}
By \eqref{Pnkx}, the relation
\[
    j^m = \sum_{0\le \ell \le m}S(m,\ell)j\cdots(j-\ell+1),
\]
and the combinatorial identity
\[
    \sum_{0\le j\le n-k}\binom{j+k}{k}\binom{n-j}{k}\binom{j}{\ell}
    = \binom{k+\ell}{k}\binom{n+k+1}{2k+\ell+1},
\]
(easily proved by convolution), we deduce \eqref{Znk-mm}.
\end{proof}

\begin{thm} The random variables $Z_{n,k}$ are asymptotically
normally distributed if and only if both $k$ and $n-k$ tend to
infinity. If $0\le k=O(1)$, then the limit law is a Beta 
distribution
\begin{align}\label{Znk-kk}
    \frac{Z_{n,k}}{n} \stackrel{d}{\longrightarrow}
    \text{\emph{Beta}}(k,k).
\end{align}
If $1\le \ell :=
n-k=O(1)$, then the limit law is a binomial distribution
\[
    Z_{n,k} \stackrel{d}{\longrightarrow}
    \text{\emph{Binom}}(\ell;\tfrac12).
\]
\end{thm}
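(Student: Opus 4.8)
The plan is to apply the machinery already assembled: use the moment formula \eqref{Znk-mm} to compute the fourth normalized central moment of $Z_{n,k}$ and then invoke Theorem~\ref{limit-law} for the normal case and a direct moment computation for the two boundary cases. First I would reduce to studying cumulants. By \eqref{Znk-mv} we already know $\mathbb{E}(Z_{n,k})=(n-k)/2$ and $\mathbb{V}(Z_{n,k}) = (n-k)(n+k+2)/(4(2k+3))$, so the variance is $\Theta((n-k)(n+k)/k)$; in particular it goes to infinity precisely when $(n-k)(n+k)/k\to\infty$. The heart of the matter is the fourth central moment. Using \eqref{Znk-mm} with $m=4$, together with the identity $\kappa_4 = \mu_4 - 3\mu_2^2$ (centered moments), I would extract a closed rational expression for $\kappa_4(Z_{n,k})$ in $n$ and $k$; after simplification I expect something of the shape $\kappa_4 \asymp -\,(n-k)(n+k)\,(\text{polynomial in }k)/(\text{polynomial in }k)$, so that
\[
    \frac{\kappa_4(Z_{n,k})}{\mathbb{V}(Z_{n,k})^2}
    \asymp -\frac{k}{(n-k)(n+k)}\cdot C(k),
\]
with $C(k)$ bounded away from $0$ and $\infty$ uniformly. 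By the criterion at the end of Section~2 (namely $\mathbb{E}(X_n^*)^4\to 3$ iff $\kappa_4(n)/\kappa_2(n)^2\to 0$, which is equivalent to the normality condition in Theorem~\ref{limit-law}), asymptotic normality holds iff $k/((n-k)(n+k))\to 0$, and this is equivalent to ``$k\to\infty$ and $n-k\to\infty$'': if both go to infinity the ratio vanishes, while if $k=O(1)$ or $n-k=O(1)$ the ratio stays bounded below, so the fourth moment does not approach $3$.

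For the regime $0\le k=O(1)$ I would pass to the scaled variable $Z_{n,k}/n$ and compute its moments directly from \eqref{Znk-mm}. Fixing $k$ and letting $n\to\infty$, the factor $\binom{n+k+1}{2k+\ell+1}/\binom{n+k+1}{2k+1}$ behaves like $n^\ell \prod_{i=1}^{\ell}\frac{1}{2k+i}$ up to lower-order terms, so
\[
    \mathbb{E}\!\left(\bigl(Z_{n,k}/n\bigr)^m\right)
    \longrightarrow \sum_{0\le \ell\le m} S(m,\ell)\,\ell!\,
    \binom{k+\ell}{k}\,\prod_{i=1}^{\ell}\frac{1}{2k+i}
    = \frac{\prod_{i=0}^{m-1}(k+i)}{\prod_{i=0}^{m-1}(2k+i)},
\]
the last equality being a finite combinatorial identity (summing the Stirling expansion back up). The right-hand side is exactly $\mathbb{E}(B^m)$ for $B\sim\text{Beta}(k,k)$, whose moments are $\prod_{i=0}^{m-1}(k+i)/\prod_{i=0}^{m-1}(2k+i)$. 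Since the Beta distribution is compactly supported, hence determined by its moments, convergence of all moments yields \eqref{Znk-kk}.

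For the regime $1\le \ell:=n-k=O(1)$, I would observe that the polynomial $F_{n,k}^{(k)}(z)$ has degree $n-k=\ell$, and up to normalization its coefficient of $z^j$ is $\binom{j+k}{k}\binom{n-j}{k}$ for $0\le j\le \ell$; substituting $n=k+\ell$ and letting $k\to\infty$, one gets $\binom{j+k}{k}\binom{k+\ell-j}{k}\sim \frac{k^{2k}}{j!\,(\ell-j)!\,(k!)^2}\cdot(\text{correction}\to1)$, so the normalized coefficients converge to $\binom{\ell}{j}2^{-\ell}$, the point masses of $\text{Binom}(\ell;1/2)$. Because these are polynomials of fixed degree $\ell$, convergence of each coefficient is convergence in distribution. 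The main obstacle throughout is the bookkeeping in the $m=4$ cumulant simplification: one must verify that the numerator of $\kappa_4(Z_{n,k})$ factors so as to expose the clean $(n-k)(n+k)$ growth and that the remaining $k$-dependent factor is genuinely bounded above and below, since the ``if and only if'' in the normality statement hinges on this two-sided control rather than just an upper bound. The two boundary-case limits, by contrast, are routine once the asymptotics of ratios of binomial coefficients are set up.
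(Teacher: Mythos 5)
Your overall strategy is the same as the paper's: invoke the fourth-moment criterion of Theorem~\ref{limit-law} for the normality claim, and use the method of moments for the two boundary regimes. However, the asymptotic you posit for $\kappa_4/\kappa_2^2$ is wrong, and wrong in a way that destroys the ``if and only if''. Carrying out the $m=4$ computation from \eqref{Znk-mm} (as the paper does) gives the exact identity
\[
    \mathbb{E}\left(\frac{Z_{n,k}-\frac{n-k}{2}}{\sqrt{\mathbb{V}(Z_{n,k})}}\right)^4-3
    =-\frac{2(3n^2+6n+k^2+4k+6)}{(n-k)(n+k+2)(2k+5)}
    \asymp -\left(\frac{1}{k+1}+\frac{1}{n-k}\right),
\]
so the deficit vanishes precisely when both $k$ and $n-k$ tend to infinity. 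Your guessed order $\frac{k}{(n-k)(n+k)}\,C(k)$ tends to $0$ for \emph{fixed} $k$ as $n\to\infty$, which would wrongly certify normality in exactly the regime where the limit is Beta; it also contradicts your own subsequent claim that the ratio ``stays bounded below'' when $k=O(1)$. So the roles of $k$ and $n$ in your ansatz are essentially reversed, and the two-sided control you flag as ``bookkeeping'' is not a verification step but the place where your proposed formula fails.

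There are also two slips in the Beta regime. First, after dividing \eqref{Znk-mm} by $n^m$ only the $\ell=m$ term survives, since the $\ell$-th term is $\Theta(n^{\ell})$; you retain the whole Stirling sum. Second, the ratio of binomials is $\binom{n+k+1}{2k+\ell+1}/\binom{n+k+1}{2k+1}=\frac{(n-k)\cdots(n-k-\ell+1)}{(2k+2)\cdots(2k+\ell+1)}$, i.e.\ the denominator starts at $2k+2$, not at $2k+1$. The correct limit moment is $\frac{(k+m)!\,(2k+1)!}{k!\,(2k+m+1)!}=\prod_{i=1}^{m}\frac{k+i}{2k+1+i}$, which identifies the law with density proportional to $x^k(1-x)^k$ on $[0,1]$ (what the paper calls $\mathrm{Beta}(k,k)$, i.e.\ $\mathrm{Beta}(k+1,k+1)$ in the standard parametrization); your expression $\prod_{i=0}^{m-1}(k+i)/(2k+i)$ describes a different distribution and degenerates to $0/0$ at $k=0$, where the limit must be uniform. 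The moment-determinacy argument and the binomial regime (fixed-degree polynomials, coefficientwise convergence to $\binom{\ell}{j}2^{-\ell}$) are sound and agree with the paper, modulo a garbled intermediate asymptotic for the product of binomial coefficients.
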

\begin{proof}
By \eqref{Znk-mv}, the variance tends to infinity if and only if
$n-k\to\infty$ ($0\le k\le n$). Also we obtain, by \eqref{Znk-mm},
\[
    \frac{\mathbb{E}\left(Z_{n,k}-\frac{n-k}{2}\right)^4}
    {\mathbb{V}(Z_{n,k})^2}-3
    =-\frac{2(3n^2+6n+k^2+4k+6)}{(n-k)(n+k+2)(2k+5)}
    =O\left(\frac{n}{k(n-k)}\right).
\]
The asymptotic normality then follows. We can indeed obtain a local
limit theorem by straightforward calculations from \eqref{Pnkx}.

When $k=O(1)$, we have, by \eqref{Pnkx},
\[
    \frac{\mathbb{E}(Z_{n,k}^m)}{n^m}
    \to \frac{(k+m)!(2k+1)!}{k!(2k+m+1)!}\qquad(m\ge0),
\]
implying that the moment generating function of the limit law
satisfies
\[
    \mathbb{E}(e^{Z_ks})
    =\frac{(2k+1)!}{k!}\sum_{m\ge0}\frac{(k+m)!}{m!(2k+m+1)!}\, s^m
    = \frac{(2k+1)!}{k!k!}\int_0^1 x^k(1-x)^k e^{xs}\dd x,
\]
a Beta distribution. Note that we can express the moment generating
function in terms of Bessel functions as
\begin{align}\label{Zk-Bessel}
    \mathbb{E}(e^{(Z_k-1/2) s})
    &= \left(\frac{is}{4}\right)^{-k-1/2}
    \Gamma(k+\tfrac32) J_{k+\frac12}(is/2) \nonumber\\
    &= \prod_{j\ge1}\left(1+\frac{s^2}
    {4\zeta_{k+1/2,j}^2}\right),
\end{align}
where $J_\alpha$ denotes the Bessel function and the
$\zeta_{\alpha,j}$'s denote the positive zeros of $J_{\alpha}(z)$
arranged in increasing order. By considering
$2(Z_k-1/2)\sqrt{2k+3}$, we obtain \eqref{Exs-ip} with $q_j =
2(2k+3)/\zeta_{k+1/2,j}^2$.

On the other hand, when $\ell := n-k = O(1)$, we have, by
\eqref{Znk-mm},
\[
    P_{n,k}(z)
    \to \left(\frac{1+z}2\right)^\ell,
\]
a binomial distribution. Note that we have the factorization
\[
    \mathbb{E}(e^{(X-\ell/2)s/\sqrt{\ell/4}})
    = \prod_{j\ge1}\left(1+\frac{4s^2}{(2j-1)^2\pi^2\ell}
    \right)^\ell.
\]
\end{proof}

\section{Applications II. Non-normal limit laws}
\label{sec-app-ii}

In addition to the extremal cases of the Tur\'an-Fej\'er
polynomials, we consider in this section more root-unitary
polynomials whose coefficients have a limit distribution that is not
Gaussian. A class of polynomials exhibiting a similar non-Gaussian
behavior is included in Appendix because the proof that they are 
root-unitary is still missing. 

\subsection{Reimer's polynomials}

In the course of investigating the remainder theory of finite
difference, \cite{reimer69a} proved, as a side result, that the
polynomials
\[
    R_{n,m}(y) := \sum_{0\le j\le n} \binom{n}{j}
    y^j \int_j^{j+1} |t(t-1)\cdots(t-n-1)|^m \dd{t}.
\]
have only unit roots. We consider the distribution of the
coefficients of $R_{n,m}(y)$.

For simplicity, we consider only $m=1$ and write $R_{n}=R_{n,1}$.
Define the random variables $X_n$ by
\[
    \mathbb{E}(y^{X_n}) := \frac{R_{n}(y)}{R_{n}(1)}.
\] 
Let 
\[
    A_k := [z^k]\frac z{\log(1-z)}\qquad(k\ge0).
\]
These numbers are (up to sign) known under the name of Cauchy 
numbers; see \cite[pp.\ 293--294]{comtet74a}. See also the recent 
paper \cite{kowalenko10a} for a detailed study of these numbers. 
\begin{lem} For $n\ge1$
\begin{align} \label{reimer-pgf}
    \mathbb{E}(y^{X_n}) = 12\sum_{0\le j\le n}\binom{n}{j}
    y^{n-j}(1-y)^{j}A_{j+2}.
\end{align}
\end{lem}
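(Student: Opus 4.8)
The plan is to prove the un‑normalized identity
\[
    R_n(y) = (n+2)!\sum_{0\le j\le n}\binom nj\,
    y^{\,n-j}(1-y)^{j}\,A_{j+2},
\]
from which \eqref{reimer-pgf} is immediate after dividing by $R_n(1)$: putting $y=1$ kills every term except $j=0$, so $R_n(1)=(n+2)!\,A_2=(n+2)!/12$ (one reads off $A_2=\tfrac1{12}$ from $\tfrac{z}{\log(1-z)}=-\bigl(1+\tfrac z2+\tfrac{z^2}3+\cdots\bigr)^{-1}$), whence $\mathbb{E}(y^{X_n})=R_n(y)/R_n(1)=12\sum_j\binom nj y^{n-j}(1-y)^jA_{j+2}$.

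First I would evaluate the integrals occurring in $R_n(y)$. On the interval $(j,j+1)$ the product $t(t-1)\cdots(t-n-1)$ has exactly $j+1$ positive and $n+1-j$ negative factors; substituting $t=j+u$ with $u\in(0,1)$ and pulling out the factor $u$ (coming from $t$) and $1-u$ (from $t-j-1$) gives
\[
    \int_j^{j+1}\bigl|t(t-1)\cdots(t-n-1)\bigr|\dd t
    = \int_0^1 u(1-u)\Bigl(\prod_{1\le i\le j}(u+i)\Bigr)
      \Bigl(\prod_{1\le i\le n-j}(i+1-u)\Bigr)\dd u .
\]
Since $\prod_{1\le i\le j}(u+i)=j!\binom{u+j}{j}$ and $\prod_{1\le i\le m}(i+1-u)=m!\binom{m+1-u}{m}$, the binomial coefficient $\binom nj$ in $R_n(y)=\sum_j\binom nj y^jI_j$ absorbs the $j!(n-j)!$, and the generating function in $n$ becomes a Cauchy product of two binomial series; using $\sum_{k\ge0}\binom{\alpha+k}{k}z^k=(1-z)^{-\alpha-1}$ one obtains
\[
    G(x,y):=\sum_{n\ge0}\frac{R_n(y)}{n!}\,x^n
    = \int_0^1 u(1-u)\,(1-xy)^{-(u+1)}(1-x)^{-(2-u)}\dd u .
\]
On the other hand, the generating function of $(n+2)!\sum_j\binom nj y^{n-j}(1-y)^jA_{j+2}$ factors in the same way, and after summing out the Cauchy numbers it equals
\[
    \frac{1}{(1-xy)^{3}}\,\frac{d^2}{dr^2}
    \Bigl(\frac{r}{\log(1-r)}\Bigr),\qquad r:=\frac{x(1-y)}{1-xy},
\]
because $\sum_{k\ge0}A_kr^k=r/\log(1-r)$ by definition and $\sum_{j\ge0}(j+1)(j+2)A_{j+2}r^j$ is its second derivative.

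The crux is then to match the two expressions, and the key observation is that $1-r=(1-x)/(1-xy)$. This turns $G(x,y)$ into $(1-xy)^{-3}(1-r)^{-2}\int_0^1 u(1-u)(1-r)^u\dd u$, so the whole identity reduces to the elementary closed form
\[
    \int_0^1 u(1-u)(1-r)^u\dd u
    = (1-r)^2\,\frac{d^2}{dr^2}\Bigl(\frac{r}{\log(1-r)}\Bigr),
\]
which I would prove by starting from $\int_0^1(1-r)^u\dd u=-r/\log(1-r)$ and applying twice the operator $D:=-(1-r)\tfrac{d}{dr}$, which brings down a factor $u$ inside the integral and satisfies $(D^2-D)\varphi=(1-r)^2\varphi''$ for any smooth $\varphi$; taking $\varphi(r)=r/\log(1-r)$ finishes the matching, and reading off the $x^n$‑coefficient gives the displayed identity.

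I expect the merely fiddly part to be the sign‑ and endpoint‑bookkeeping in folding each $[j,j+1]$ onto $[0,1]$ (an off‑by‑one there contaminates everything downstream), while the genuine point of the argument is spotting the substitution $r=x(1-y)/(1-xy)$, under which $1-r=(1-x)/(1-xy)$, together with the one‑line operator identity for $D$; once those two facts are in hand, the remaining computation is forced.
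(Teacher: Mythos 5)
Your proof is correct, but it takes a genuinely different route from the paper's. The paper never evaluates the interval integrals directly: it observes that on $(j,j+1)$ one has $|t(t-1)\cdots(t-n-1)|=(-1)^{n+1+j}(n+2)!\binom{t}{n+2}$, writes $\binom{t}{n+2}=[z^{n+2}](1+z)^t$, integrates $(1+z)^t$ over the unit interval (which is where $1/\log(1+z)$ appears), and sums the binomial in $j$ to obtain the closed form $R_n(y)=(n+2)!\,[z^{n+1}]\,(1-(1-z)y)^n/\log(1-z)$, i.e.\ \eqref{R-exact}, from which the lemma drops out by expanding the $n$-th power; that closed form is then reused to compute the moments in the next lemma. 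You instead fold each interval onto $[0,1]$, pass to the bivariate exponential generating function, and reduce the whole identity, via the substitution $r=x(1-y)/(1-xy)$ with $1-r=(1-x)/(1-xy)$, to the one-variable statement $\int_0^1 u(1-u)(1-r)^u\dd u=(1-r)^2\varphi''(r)$ for $\varphi(r)=r/\log(1-r)$. I checked the pieces: the factorization $G(x,y)=(1-xy)^{-3}(1-r)^{-2}\int_0^1u(1-u)(1-r)^u\dd u$, the identification of the target's generating function with $(1-xy)^{-3}\varphi''(r)$ (using $(j+m+1)(j+m+2)\binom{j+m}{j}=(j+1)(j+2)\binom{j+m+2}{j+2}$), and the operator identity $D^2-D=(1-r)^2\,\frac{d^2}{dr^2}$ for $D=-(1-r)\frac{d}{dr}$; all are sound, and $R_n(1)=(n+2)!A_2=(n+2)!/12$ closes the argument. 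Both proofs ultimately rest on the same mechanism, $\int_0^1c^u\dd u=(c-1)/\log c$, which is how the Cauchy numbers enter; the paper's version is shorter and yields a reusable coefficient formula, while yours is heavier computationally but makes the appearance of $1/\log(1-z)$ fully transparent and never needs the $[z^{n+2}](1+z)^t$ trick. One cosmetic slip in exactly the bookkeeping you flagged: under $t=j+u$ the factor $u$ comes from $t-j$ (not from $t$, except when $j=0$) and $1-u$ is $|t-(j+1)|$; your displayed integral is nonetheless correct.
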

\begin{proof} We have 
\begin{align}
    R_n(y) &= \sum_{0\le j\le n} \binom{n}{j}(-1)^{n+1+j}
    y^j \int_j^{j+1} t(t-1)\cdots(t-n-1) \dd{t}\nonumber \\
    &= (n+2)!(-1)^{n+1}\sum_{0\le j\le n} \binom{n}{j}(-1)^{j}
    y^j \int_j^{j+1} \binom{t}{n+2} \dd{t}\nonumber\\
    &= (n+2)!(-1)^{n+1}[z^{n+2}]\sum_{0\le j\le n}
    \binom{n}{j}(-1)^{j}
    y^j \int_j^{j+1} (1+z)^t \dd{t}\nonumber\\
    &=(n+2)!(-1)^{n+1}[z^{n+1}]\sum_{0\le j\le n}
    \binom{n}{j}(-1)^{j}
    y^j \frac{(1+z)^j}{\log(1+z)} \nonumber\\
    &= (n+2)![z^{n+1}]\frac{(1-(1-z)y)^n}{\log(1-z)}.
    \label{R-exact}
\end{align}
In particular
\[
    R_{n}(1) = (n+2)! [z]\frac1{\log(1-z)} 
    = \frac{(n+2)!}{12},
\]
and \eqref{reimer-pgf} follows. 
\end{proof}
Note that $A_0 = -1$ and
\[
    A_k = -\sum_{0\le j<k}\frac{A_j}{k+1-j}\qquad(k\ge1).
\]
All $A_k$'s are positive except $A_0$. 

\begin{lem} The moments of $X_n$ satisfy 
\begin{align} \label{R-EXnm}
    \mathbb{E}(X_n^m) = \sum_{0\le k\le m}\tilde{A}_kS(m,k)
    n(n-1)\cdots (n-k+1)\qquad(m\ge0),
\end{align}
where
\begin{align}\label{Bm}
    \tilde{A}_k := 12\sum_{0\le \ell \le k}
    \binom{k}{\ell}(-1)^\ell A_{\ell+2}.
\end{align}
In particular,
\[
    \mathbb{E}(X_n) =\frac n2,\quad
    \mathbb{V}(X_n) = \frac n{60}(4n+11).
\]
\end{lem}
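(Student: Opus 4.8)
The plan is to compute first the \emph{factorial moments} of $X_n$ and then pass to ordinary moments via the Stirling numbers $S(m,k)$. The starting observation is that for any integer-valued random variable $X$ the binomial series gives $(1+u)^X=\sum_{k\ge0}\binom Xk u^k$, hence $\mathbb{E}\bigl((1+u)^{X_n}\bigr)=\sum_{k\ge0}\mathbb{E}\bigl((X_n)_k\bigr)u^k/k!$, where $(X_n)_k:=X_n(X_n-1)\cdots(X_n-k+1)$. Thus $\mathbb{E}\bigl((X_n)_k\bigr)=k!\,[u^k]\,\mathbb{E}\bigl((1+u)^{X_n}\bigr)$, so the factorial moments can be read off directly from the shifted probability generating function supplied by \eqref{reimer-pgf}.

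Concretely, I would set $y=1+u$ in \eqref{reimer-pgf} to obtain $\mathbb{E}\bigl((1+u)^{X_n}\bigr)=12\sum_{0\le j\le n}\binom nj(1+u)^{n-j}(-u)^jA_{j+2}$, and then extract $[u^k]$ term by term using $[u^k](1+u)^{n-j}(-u)^j=(-1)^j\binom{n-j}{k-j}$, which vanishes unless $j\le k$. This yields $\mathbb{E}\bigl((X_n)_k\bigr)=12\,k!\sum_{0\le j\le k}(-1)^j\binom nj\binom{n-j}{k-j}A_{j+2}$. The key simplification is the elementary subset-of-a-subset identity $\binom nj\binom{n-j}{k-j}=\binom nk\binom kj$, which pulls all the $n$-dependence out of the inner sum and leaves $\mathbb{E}\bigl((X_n)_k\bigr)=\tilde A_k\,k!\binom nk=\tilde A_k\,n(n-1)\cdots(n-k+1)$ with $\tilde A_k$ exactly as in \eqref{Bm}. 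Expanding $X_n^m=\sum_{0\le k\le m}S(m,k)(X_n)_k$ and taking expectations then gives \eqref{R-EXnm} at once.

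For the explicit mean and variance I would compute $\tilde A_0,\tilde A_1,\tilde A_2$ from the recurrence $A_k=-\sum_{0\le j<k}A_j/(k+1-j)$ with $A_0=-1$, which gives $A_2=\tfrac1{12}$, $A_3=\tfrac1{24}$, $A_4=\tfrac{19}{720}$, and hence $\tilde A_0=12A_2=1$, $\tilde A_1=12(A_2-A_3)=\tfrac12$, $\tilde A_2=12(A_2-2A_3+A_4)=\tfrac{19}{60}$. Then $\mathbb{E}(X_n)=\tilde A_1n=n/2$ and $\mathbb{E}(X_n^2)=\tilde A_1n+\tilde A_2n(n-1)$, so $\mathbb{V}(X_n)=\mathbb{E}(X_n^2)-(n/2)^2=\tfrac{n^2}{15}+\tfrac{11n}{60}=\tfrac n{60}(4n+11)$. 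There is no genuine obstacle here: the substantive point is simply recognizing the shifted-pgf / falling-factorial structure together with the binomial revision identity, and the only place to be careful is the bookkeeping of the small Cauchy numbers $A_j$.
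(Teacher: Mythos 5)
Your proof is correct and follows essentially the same route as the paper: compute the factorial moments $\mathbb{E}\bigl(X_n(X_n-1)\cdots(X_n-k+1)\bigr)=\tilde A_k\,n(n-1)\cdots(n-k+1)$ from the probability generating function and then convert to ordinary moments via the Stirling numbers $S(m,k)$. The only cosmetic difference is that you extract $[u^k]$ from $\mathbb{E}\bigl((1+u)^{X_n}\bigr)$ using \eqref{reimer-pgf} and the identity $\binom nj\binom{n-j}{k-j}=\binom nk\binom kj$, whereas the paper differentiates the representation \eqref{R-exact} $m$ times in $y$ at $y=1$; the numerical values $A_2=\tfrac1{12}$, $A_3=\tfrac1{24}$, $A_4=\tfrac{19}{720}$ and the resulting mean and variance all check out.
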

\begin{proof}
By taking $m$-th derivative with respect to $y$ and then
substituting $y=1$ in \eqref{R-exact}, we obtain
\begin{align*}
    \mathbb{E}(X_n(X_n-1)\cdots(X_n-m+1))
    &= 12 [z^{n+1}] \frac{\partial^m}{\partial y^m}
    \frac{(1-(1-z)y)^{n}}{\log(1-z)}\Biggl|_{y=1}\\
    &= 12 [z^{m+1}]\frac{(z-1)^m}{\log(1-z)} \,n(n-1)\cdots(n-m+1),
\end{align*}
which yields \eqref{R-EXnm} since
\[
    \mathbb{E}(X_n^m) = \sum_{0\le k\le m}S(m,k)
    \mathbb{E}(X_{n}(X_{n}-1)\cdots(X_{n}-k+1)).
\]
\end{proof}

\begin{thm} The sequence of random variables $\{X_n/n\}$ converges in
distribution to $X$ whose $m$-th moment equals $\tilde{A}_m$ (defined in
\eqref{Bm}).
\end{thm}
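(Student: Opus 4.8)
The plan is a direct application of the method of moments, starting from the moment formula \eqref{R-EXnm}. First I would divide that identity by $n^m$ to get
\[
    \mathbb{E}\left(\left(\frac{X_n}{n}\right)^m\right)
    = \sum_{0\le k\le m}\tilde{A}_k\,S(m,k)\,
    \frac{n(n-1)\cdots(n-k+1)}{n^m}.
\]
For each fixed $m$ and each $k\le m$ we have $n(n-1)\cdots(n-k+1)/n^m = (1+o(1))\,n^{k-m}$, which tends to $0$ when $k<m$ and to $1$ when $k=m$; since $S(m,m)=1$, only the top term survives and
\[
    \mathbb{E}\left(\left(\frac{X_n}{n}\right)^m\right)
    \longrightarrow \tilde{A}_m\qquad(m\ge0).
\]

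Second, I would upgrade this convergence of moments to convergence in distribution. The key observation is that $R_n(y)$ has degree exactly $n$ with nonnegative coefficients, so $X_n$ takes values in $\{0,1,\dots,n\}$ and hence $X_n/n$ takes values in the compact interval $[0,1]$. Consequently the family $\{X_n/n\}$ is tight; by Helly's selection theorem it has weakly convergent subsequences, and since each monomial $x\mapsto x^m$ is bounded and continuous on $[0,1]$, every subsequential weak limit $X$ is supported in $[0,1]$ and satisfies $\mathbb{E}(X^m)=\tilde A_m$ for all $m$. A probability measure on a bounded interval is uniquely determined by its moment sequence, so all subsequential limits coincide; therefore the whole sequence $\{X_n/n\}$ converges in distribution to the (unique) law $X$ on $[0,1]$ whose $m$-th moment is $\tilde A_m$. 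In particular this argument also shows, with no extra work, that $\{\tilde A_m\}$ is a legitimate (Hausdorff) moment sequence.

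Once \eqref{R-EXnm} is available the argument is essentially routine; the only point needing a word of care is the moment-determinacy/uniqueness of the limit, and here that is immediate from the support being contained in $[0,1]$ (one could alternatively invoke the moment generating function bound of Lemma~\ref{est-mgf}, but that is unnecessary for this example). Finally, one may record the compact closed form for the moments of $X$: proceeding as in the proof of the moment lemma (extract the coefficient of the top falling factorial on both sides of \eqref{R-EXnm} and \eqref{R-exact}, or equivalently rewrite the alternating sum in \eqref{Bm} as a coefficient), one gets $\tilde A_m = 12\,[z^{m+1}]\dfrac{(z-1)^m}{\log(1-z)}$, which can be used to identify the distribution of $X$ more explicitly.
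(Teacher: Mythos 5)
Your proposal is correct and follows the same basic strategy as the paper: read off $\mathbb{E}((X_n/n)^m)\to\tilde A_m$ from \eqref{R-EXnm} (only the $k=m$ term survives after dividing by $n^m$, since $S(m,m)=1$), then conclude convergence in distribution by the method of moments. The only substantive point where you diverge is the moment-determinacy step. The paper justifies uniqueness of the limit law by the growth estimate $\tilde A_m=O(2^m)$, obtained from $A_k=O(1/k)$, which forces the moment generating function to be entire and hence the moment sequence to be determining. You instead observe that $X_n$ is supported on $\{0,1,\dots,n\}$ (the coefficients of $R_n(y)$ are nonnegative and the polynomial has degree $n$), so $X_n/n$ lives in $[0,1]$; tightness is automatic, every subsequential limit is supported in $[0,1]$, and determinacy is the Hausdorff moment problem on a compact interval. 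Your route is slightly more elementary in that it does not require any asymptotic information about the Cauchy numbers, and it yields for free that $\{\tilde A_m\}$ is a Hausdorff moment sequence; the paper's route gives the quantitative bound $\tilde A_m=O(2^m)$, which is what it later uses to write down and manipulate the moment generating function of the limit. Your closing identity $\tilde A_m=12\,[z^{m+1}](z-1)^m/\log(1-z)$ is also correct and consistent with the factorial-moment computation preceding \eqref{R-EXnm}.
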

\begin{proof}
By \eqref{R-EXnm}, $\mathbb{E}(X_n^m)\sim \tilde{A}_mn^m$.
Since $A_k = O(1/k)$, we see that $\tilde{A}_m = O(2^m)$,
implying that such a moment sequence determines uniquely a
distribution.
\end{proof}
The limit law has the moment generating function
\begin{align*}
    \mathbb{E}(e^{X s}) &= 12\sum_{m\ge0}\frac{s^m}{m!}
    \sum_{0\le j\le m}\binom{m}{j} (-1)^{j} A_{j+2} \\
    &= 12e^s\sum_{j\ge 0}
    \frac{A_{j+2}}{j!}\, (-s)^j \\
    &= -\frac{12}{2\pi i s}\int_{\mathscr{H}}
    e^{t+s}\left(\frac1{\log(1+\frac st)}
    -\frac ts-\frac12\right)\dd t ,
\end{align*}
where the integration $\int_{\mathscr{H}}$ is taken along some
Hankel contour; see \cite[p.\ 745]{flajolet09a}.

When $m\ge2$, we can apply the same arguments but the 
technicalities are more involved.

\subsection{Chung-Feller's arcsine law}
The classical Chung-Feller theorem states that the number of
positive terms $W_n$ of the sums $S_n = X_1+\cdots+X_n$, where $X_i$
takes $\pm1$ with probability $1/2$ each, has the probability
\[
    \mathbb{P}(W_n=k) = \binom{2k}{k}\binom{2n-2k}{n-k}4^{-n}
    \qquad(k=0,\dots,n).
\]
The limit distribution is an arcsine law (see \cite[\S
III.4]{feller68a})
\[
    \frac{W_n}{n} \stackrel{d}{\longrightarrow} W,
    \quad\text{where}\quad
    \mathbb{P}(W<x) = \frac2\pi\arcsin\sqrt{x}.
\]

The corresponding probability generating function is a polynomial
with only unit roots. Indeed, following the same proof as in
\cite{turan49a}, we can show that $\mathbb{E}(z^{W_n})$ is connected
to Legendre polynomials by the relation
\begin{align*}
    \mathbb{E}(z^{W_n}) &= [v^n] \frac1{\sqrt{(1-v)(1-zv)}}\\
    &= z^{n/2} \text{Legendre}_n
    \left(\frac{z^{1/2}+z^{-1/2}}{2}\right),
\end{align*}
so that the root-unitarity of the left-hand side follows from the
property that Legendre polynomials have only real roots over the
interval $[-1,1]$. Note that the moment generating function of the
arcsin law with zero mean and unit variance is given by the Bessel
function
\begin{align*}
    \mathbb{E}(e^{(W-1/2)s/\sqrt{2}})
    &= e^{-\sqrt{2}s}\left(1+\sum_{k\ge1}\binom{2k}k
    \frac{(s/\sqrt{2})^k} {k!}   \right)\\
    &=J_0(\sqrt{2}\,is) = \prod_{j\ge1}\left(
    1+\frac{2s^2}{\zeta_{0,j}^2}\right),
\end{align*}
where the $\zeta_{0,j}$'s are the positive zeros of $J_0(z)$. So we
have \eqref{Exs-ip} with $q=0$ and $q_j = 4\zeta_{0,j}^{-2}$.

In a more general manner, from the Gegenbauer polynomials, one can
also define the random variables $W_n$ by
\begin{align*}
    \mathbb{E}(z^{W_n}) &= \frac1{\binom{2\alpha+n-1}n}
    [v^n] \frac1{(1-v)^{-\alpha}(1-zv)^{-\alpha}}\\
    &= \sum_{0\le j\le n} \frac{\binom{\alpha+j-1}{j}
    \binom{\alpha+n-j-1}{n-j}}{\binom{2\alpha+n-1}{n}}\, z^j
    \qquad(\alpha>0),
\end{align*}
for which all coefficients are positive and $\mathbb{E}(z^{W_n})$ has
only unit roots. The limit law $W_\alpha$ can be derived as in the
bounded case of the Tur\'an-Fej\'er polynomials
\begin{align*}
    \mathbb{E}(e^{(W_\alpha-1/2) s})
    &= \left(\frac{is}{4}\right)^{-\alpha+1/2}
    \Gamma(\alpha+1/2) J_{\alpha-1/2}(is/2)\\
    &= \prod_{j\ge1}\left(1+\frac{s^2}
    {4\zeta_{\alpha+1/2,j}^2}\right).
\end{align*}
Note that the random variable $2\sqrt{2\alpha+1}(W_\alpha-1/2)$ has
variance one.

For other potential examples, see \cite[Chapter 6]{johnson05a}.

\subsection{Uniform distribution}

The literature abounds with criteria for the root-unitarity of 
polynomials. Among these, \cite{lakatos02a} proved that a complex 
polynomial $P(z) :=\sum_{0\le k\le n}a_k z^k$ with $a_k=a_{n-k}$ 
is root-unitary if 
\[
    |a_n|\ge \sum_{0\le j\le n}|a_n-a_j| ; 
\]
see also \cite{schinzel05a}. In particular, if the coefficients of 
$P(z)$ are close to a constant, then all its roots lie on the unit 
circle. For example, let $E_j = j![z^j]1/\cosh(z)$ denote Euler's 
numbers; then the polynomial 
\[
    P_n(z)=(-1)^n\sum_{0\le j\le n} 
    \binom{2n}{2j} E_{2j}E_{2n-2j}z^j
	= [w^n] \frac{1}{\cos(\sqrt{w})\cos(\sqrt{wz})},
\]
is root-unitary (see \cite{lalin11a}) with non-negative coefficients. 
See also \cite{lalin12a} for more information and other root-unitary
polynomials. Observe that  
\[
	\frac{(-1)^n}{(2n)!}\binom{2n}{2j}E_{2j}E_{2n-2j}
	\sim \frac{4^{n+2}}{\pi^{2n+2}},
\]
as $j, n-j\to\infty$. Thus we can show that the random variables 
associated with the coefficients of $P_n(z)$ will be close to uniform, 
and the limit law is also uniform. Details are omitted here. 

\bibliographystyle{apalike}
\bibliography{polynomials_bibliography}

\section*{Appendix. A class of mixtures of hypergeometric distributions}

Yet another class of polynomials with a similar nature to those of
Tur\'an-Fej\'er arises from the analysis of the partitioning stages
of quicksort and defined as follows (see \cite{chern02a}). Consider
the random variable $Y_n$ defined by
\[
    \mathbb{P}(Y_n=k) = \sum_{0\le j<r}p_j
    \frac{\binom{k}{j}\binom{n-1-k}{r-1-j}}{\binom{n}{r}},
\]
where $r\ge1$ and $\sum_{0\le j<r} p_j =1$ is a given known
distribution. Many concrete examples are discussed in
\cite{chern02a}. Let $P(z) := \sum_{0\le j<r} p_j z^j$. Assume
throughout that $p_j = p_{r-1-j}$ for $1\le j<r$. Numerical evidence
suggested that the probability generating function
\[
    \mathbb{E}(z^{Y_n})
    =\sum_{0\le j<r}p_j \sum_{k} \frac{\binom{k}{j}
    \binom{n-1-k}{r-1-j}}{\binom{n}{r}}z^k
\]
are root-unitary for many natural choices of $\{p_j\}$, but it is
unclear for which class of polynomials $P(z)$ will the polynomials
be root-unitary\footnote{The problem can be formulated by asking for
which class of polynomials $P(z)= \sum_{0\le j<r} p_j z^j$ will the
polynomials
\[
    [w^n]\frac{1}{(1-w)(1-zw)}P\left(
    \frac{1}{(1-w)(1-zw)}\right)
\]
have only unit roots?}. The results below do not depend on the
root-unitarity of $\mathbb{E}(z^{Y_n})$.

Assuming from now on $p_j=p_{r-1-j}$ for $0\le j<r$, we examine the
moment structure of $Y_n$. Note that this assumption implies that
$\sum_{0\le j<r} jp_j = (r-1)/2$.

\begin{lem} The $m$-th moment of $Y_n$ is given explicitly by
\begin{align}\label{EXnm-S}
    \mathbb{E}(Y_n^m)
    = \sum_{0\le h\le m} \nu_{m,h}
    \frac{\binom{n+h}{r+h}}{\binom{n}{r}}.
\end{align}
Here
\[
    \nu_{m,h} := (-1)^{m+h}S(m+1,h+1) \sum_{0\le \ell\le h}
    s(h+1,\ell+1) \pi_\ell,
\]
where the $s(m,h)$ denote the signless Stirling numbers of the first
kind, and $\pi_\ell := \sum_{0\le j<r} p_jj^\ell$. In particular,
$\mathbb{E}(Y_n) = (n-1)/2$ and the variance satisfies
\[
    \mathbb{V}(Y_n) = \frac{(4\pi_2-r^2+3r)n^2+2(6\pi_2-2r^2+3r-1)n
    +8\pi_2-3r^2+3r-2}{4(r+1)(r+2)}.
\]
\end{lem}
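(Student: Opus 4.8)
The plan is to compute, for each fixed $j$ and $m$, the power sum $\sum_{k\ge0}k^m\binom kj\binom{n-1-k}{r-1-j}$ by re-expanding $k^m\binom kj$ in a basis tailored to the Vandermonde convolution, and then to sum against the $p_j$'s. The starting point is the definition of $Y_n$, which gives
\[
    \mathbb{E}(Y_n^m)=\frac1{\binom nr}\sum_{0\le j<r}p_j
    \sum_{k\ge0} k^m\binom kj\binom{n-1-k}{r-1-j},
\]
so everything reduces to evaluating the inner sum over $k$.

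The key observation I would use is that, as a polynomial in $k$,
\[
    \binom{k+h}{j+h}=\binom kj\cdot
    \frac{(k+1)(k+2)\cdots(k+h)}{(j+1)(j+2)\cdots(j+h)}
    \qquad(h\ge0),
\]
so the quotients $\binom kj^{-1}\binom{k+h}{j+h}$, $h=0,1,2,\dots$, are polynomials in $k$ of degrees $0,1,2,\dots$ and hence form a basis of the polynomial ring. Writing the expansion of the monomial $k^m$ in the shifted rising-factorial basis $\{1,\,(k+1),\,(k+1)(k+2),\dots\}$ as $k^m=\sum_{0\le h\le m}a_{m,h}(k+1)(k+2)\cdots(k+h)$, I get
\[
    k^m\binom kj=\sum_{0\le h\le m}a_{m,h}\,
    (j+1)(j+2)\cdots(j+h)\,\binom{k+h}{j+h}.
\]
Then the Vandermonde convolution (after the substitution $k\mapsto k-h$) gives $\sum_{k\ge0}\binom{k+h}{j+h}\binom{n-1-k}{r-1-j}=\binom{n+h}{r+h}$, so that
\[
    \mathbb{E}(Y_n^m)=\frac1{\binom nr}\sum_{0\le h\le m}
    a_{m,h}\binom{n+h}{r+h}
    \sum_{0\le j<r}p_j\,(j+1)(j+2)\cdots(j+h).
\]

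It then remains to identify the two constants, which is routine Stirling-number bookkeeping. For $a_{m,h}$, I would substitute $y=k+1$, expand $(y-1)^m$ by the binomial theorem, combine $y^i=\sum_h(-1)^{i-h}S(i,h)\,y(y+1)\cdots(y+h-1)$ with the identity $\sum_i\binom mi S(i,h)=S(m+1,h+1)$, and collect terms to get $a_{m,h}=(-1)^{m-h}S(m+1,h+1)=(-1)^{m+h}S(m+1,h+1)$. For the $p_j$-sum, I observe that $j(j+1)(j+2)\cdots(j+h)$ is the rising factorial of order $h+1$, so it equals $\sum_\ell s(h+1,\ell)j^\ell$ with signless first-kind Stirling coefficients; dividing by $j$ gives $(j+1)(j+2)\cdots(j+h)=\sum_{0\le\ell\le h}s(h+1,\ell+1)j^\ell$, and hence $\sum_{0\le j<r}p_j(j+1)\cdots(j+h)=\sum_{0\le\ell\le h}s(h+1,\ell+1)\pi_\ell$. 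Substituting both into the last display produces exactly $\nu_{m,h}=(-1)^{m+h}S(m+1,h+1)\sum_{0\le\ell\le h}s(h+1,\ell+1)\pi_\ell$, which is \eqref{EXnm-S}.

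Finally, the mean and variance follow by specialization. Using $\pi_0=1$ and $\pi_1=(r-1)/2$ (the latter being the symmetry consequence of $p_j=p_{r-1-j}$ noted above), one has $\nu_{1,0}=-1$ and $\nu_{1,1}=(r+1)/2$, so $\mathbb{E}(Y_n)=-1+\tfrac{r+1}2\cdot\tfrac{n+1}{r+1}=\tfrac{n-1}2$; and $\mathbb{E}(Y_n^2)$ is the corresponding linear combination of $1$, $\tfrac{n+1}{r+1}$ and $\tfrac{(n+1)(n+2)}{(r+1)(r+2)}$, with $\pi_2$ entering only through $\nu_{2,2}$, so that $\mathbb{V}(Y_n)=\mathbb{E}(Y_n^2)-\bigl(\tfrac{n-1}2\bigr)^2$ simplifies to the stated rational function of $n$. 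That last simplification is purely mechanical. The only genuine idea in the argument is the choice of basis $\{\binom{k+h}{j+h}\}_{h\ge0}$: it is precisely what simultaneously absorbs the factor $\binom kj$ inherent in the law of $Y_n$ and collapses the $k$-sum by Vandermonde to $\binom{n+h}{r+h}$; the one place to be careful afterwards is keeping the signs straight in the two Stirling-number expansions (the $(-1)^{m+h}$ in $a_{m,h}$ versus the sign-free first-kind expansion of the rising factorial).
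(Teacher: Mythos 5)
Your proposal is correct and follows essentially the same route as the paper: expand $k^m$ in the shifted rising-factorial basis with coefficients $(-1)^{m+h}S(m+1,h+1)$, absorb $\binom{k}{j}$ via $(k+1)\cdots(k+h)\binom{k}{j}=(j+1)\cdots(j+h)\binom{k+h}{j+h}$, collapse the $k$-sum by Vandermonde to $\binom{n+h}{r+h}$, and convert $(j+1)\cdots(j+h)$ into powers of $j$ with signless first-kind Stirling numbers. The only difference is that you derive the coefficient identity for $k^m$ (which the paper merely quotes) and carry out the mean/variance specialization explicitly; both are correct.
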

Note that $\pi_2-4r^2+3r$ is always positive because
\[
    \frac{r(r-3)}{4}<\pi_1^2 = \frac{(r-1)^2}4 \le \pi_2.
\]
Thus the limit law of $Y_n$ is never Gaussian for finite $r$.
\begin{proof}
By definition,
\begin{align} \label{EXnm}
    \mathbb{E}(Y_n^m) = \sum_{0\le j<r}p_j \sum_{k}k^m
    \frac{\binom{k}{j}\binom{n-1-k}{r-1-j}}{\binom{n}{r}}.
\end{align}
The, using the relation
\[
    k^m = \sum_{0\le h\le m} (-1)^{m+h}S(m+1,h+1)
    (k+1)\cdots(k+m),
\]
we obtain
\begin{align*}
    &\sum_{k}k^m \binom{k}{j}\binom{n-1-k}{r-1-j} \\
    &\qquad= \sum_{0\le h\le m} (-1)^{m+h}S(m+1,h+1) \sum_{k}
    (k+1)\cdots(k+h) \binom{k}{j}\binom{n-1-k}{r-1-j} \\
    &\qquad= \sum_{0\le h\le m} (-1)^{m+h}S(m+1,h+1)
    (j+1)\cdots(j+h)
    \sum_{k}\binom{k+h}{j+h}\binom{n-1-k}{r-1-j}\\
    &\qquad= \sum_{0\le h\le m} (-1)^{m+h}S(m+1,h+1)
    (j+1)\cdots(j+h) \binom{n+h}{r+h}.
\end{align*}
Now, by substituting the expression
\[
    (j+1)\cdots (j+h)
    = \sum_{0\le \ell\le h} s(h+1,\ell+1) j^\ell,
\]
We then obtain \eqref{EXnm-S}.
\end{proof}

\begin{thm} The sequence of random variables $\{Y_n/n\}$ converges in
distribution to a limit law $Y$ whose moment generating function
satisfies
\begin{align} \label{eYs}
    \mathbb{E}(e^{Ys}) =
    r\sum_{0\le j<r}p_j \binom{r-1}{j} \int_0^1 e^{xs}
    x^{r-1-j}(1-x)^j \rm{d} x.
\end{align}
\end{thm}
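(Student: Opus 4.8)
The plan is to prove the convergence in distribution by the method of moments, starting from the explicit moment formula \eqref{EXnm-S} established in the preceding lemma. Since $Y_n$ takes values in $\{0,1,\dots,n-1\}$, the variable $Y_n/n$ is supported on $[0,1]$, so any weak limit is also supported on $[0,1]$ and is therefore uniquely determined by its moments (the Hausdorff moment problem is determinate). Hence it suffices to show that, for every fixed $m\ge0$, the quantity $\mathbb{E}\bigl((Y_n/n)^m\bigr)$ converges as $n\to\infty$ to the $m$-th moment of the distribution defined by the right-hand side of \eqref{eYs}; all sums involved are finite, so there are no convergence issues to worry about.

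First I would extract the leading asymptotics of the binomial ratios in \eqref{EXnm-S}. Writing
\[
    \frac{\binom{n+h}{r+h}}{\binom{n}{r}}
    = \frac{r!}{(r+h)!}\,(n+1)(n+2)\cdots(n+h)
    \sim \frac{r!}{(r+h)!}\,n^h\qquad(n\to\infty),
\]
we see that after dividing \eqref{EXnm-S} by $n^m$ only the $h=m$ term survives in the limit, so that
\[
    \mathbb{E}\bigl((Y_n/n)^m\bigr)\longrightarrow
    \frac{r!}{(r+m)!}\,\nu_{m,m}
    = \frac{r!}{(r+m)!}\sum_{0\le j<r}p_j\,(j+1)(j+2)\cdots(j+m),
\]
where I used $S(m+1,m+1)=1$ together with the identity $\sum_{\ell}s(m+1,\ell+1)j^\ell=(j+1)\cdots(j+m)$ (already used in the lemma's proof) to simplify $\nu_{m,m}$.

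Next I would identify this limit with the $m$-th moment of the distribution on the right-hand side of \eqref{eYs}. Using $\int_0^1 x^{a}(1-x)^{b}\dd x = a!\,b!/(a+b+1)!$ for non-negative integers $a,b$, that $m$-th moment equals
\[
    r\sum_{0\le j<r}p_j\binom{r-1}{j}\,\frac{(r-1-j+m)!\,j!}{(r+m)!}
    = \frac{r!}{(r+m)!}\sum_{0\le j<r}p_j\,(r-j)(r-j+1)\cdots(r-j+m-1).
\]
Applying the palindromy hypothesis $p_j=p_{r-1-j}$ and the substitution $j\mapsto r-1-j$ turns the product $(j+1)\cdots(j+m)$ in the previous display into $(r-j)\cdots(r-j+m-1)$, so the two moment sequences coincide, which completes the argument. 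I would also remark that the right-hand side of \eqref{eYs} is a finite convex combination (weights $p_j\ge0$ with $\sum_j p_j=1$) of the moment generating functions of $\mathrm{Beta}(r-j,\,j+1)$ laws, hence is itself the moment generating function of an honest distribution on $[0,1]$; this both supplies the target moments and re-confirms the determinacy that upgrades moment convergence to weak convergence.

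The work here is essentially bookkeeping, so I do not anticipate a genuine obstacle; the one delicate point is the simplification of $\nu_{m,m}$ followed by the matching of the two closed forms, where the palindromy assumption must be invoked exactly once and at the right place — a stray sign or an off-by-one shift in the consecutive-product factors would spoil the identification. Everything else (the asymptotics of the binomial ratio, the Beta integral) is routine, and since $r$ is fixed no uniformity is needed.
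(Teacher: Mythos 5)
Your proof is correct and follows essentially the same route as the paper: both arguments apply the method of moments to the explicit formula \eqref{EXnm-S}, isolate the dominant $h=m$ term to get $\mathbb{E}\bigl((Y_n/n)^m\bigr)\to \frac{r!}{(r+m)!}\sum_j p_j(j+1)\cdots(j+m)$, and invoke the palindromy $p_j=p_{r-1-j}$ exactly once to identify this with the moments of the Beta mixture in \eqref{eYs}. The only (harmless) difference is that you verify the identification by computing the moments of the right-hand side via the Beta integral, whereas the paper sums the moment series into the integral form directly; you also spell out the Hausdorff determinacy step that the paper leaves as ``straightforward.''
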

\begin{proof}
Indeed, by \eqref{EXnm-S},
\[
    \frac{\mathbb{E}(Y_n^m)}{n^m}
    \sim \frac{r!}{(r+m)!}\sum_{0\le j<r} p_j
    (j+1)\cdots(j+m),
\]
so that
\[
    \frac{\mathbb{E}(Y_n^m)}{n^m}
    \stackrel{d}{\longrightarrow} Y,
\]
where the moment generating function of $X$ is given by
\begin{align*}
    \mathbb{E}(e^{Ys})
    &= \sum_{m\ge0}\frac{s^m}{m!} \sum_{0\le j<r} p_j
    \frac{\Gamma(j+m+1)\Gamma(r+1)}{\Gamma(r+m+1)\Gamma(j+1)}\\
    &= \sum_{m\ge0}\frac{s^m}{m!} \sum_{0\le j<r} p_j
    \frac{\Gamma(r-j+m)\Gamma(r+1)}{\Gamma(r+m+1)\Gamma(r-j)}\\
    &= r\sum_{m\ge0}\frac{s^m}{m!} \sum_{0\le j<r} p_j
    \binom{r-1}{j} \int_0^1 (1-x)^j x^{r-1-j+m} \text{d}x,
\end{align*}
which proves \eqref{eYs}. The justification of the unique
characterization of this limit law is straightforward.
\end{proof}

Examples.
\begin{itemize}
\item The median of $(2k+1)$ elements: This corresponds to the
case when $r=2k+1$ and $p_k=1$. We then obtain
\[
    \mathbb{E}(e^{Ys}) = \frac{(2k+1)!}{k!k!}
    \int_0^1 e^{xs} x^k(1-x)^k \text{d} x,
\]
a Beta distribution (with a Bessel-type infinite-product
representation); see \eqref{Znk-kk} and \eqref{Zk-Bessel}.

\item Uniform distributon: In this case, $p_j=1/r$, $1\le j<r$. 
This is algorithmically uninteresting, but has the limit moment 
generating function $(e^s-1)/s$.

\item The ninther (the median of three medians, each being the 
median of three elements): This is the case when
\[
    \{p_j\}_{j=0,\dots,8} = \left\{0,0,0,\tfrac 3{14}, \tfrac47,
    \tfrac{3}{14},0,0,0\right\}.
\]
We have a mixture of Beta distributions for the limit law.

\end{itemize}
Many sophisticated cases can be found in \cite{chern02a}.
\end{document}